\newcommand{\N}{\mathbb{N}}
\newcommand{\Z}{\mathbb{Z}}
\newcommand{\R}{\mathbb{R}}
\newcommand{\C}{\mathbb{C}}
\newcommand{\tiling}{\mathcal{T}}
\newcommand{\dominov}[2]{\draw (#1,#2)--++(1,0)--++(0,2)--++(-1,0)--++(0,-2);
\fill[gray](#1,#2)--++(1,0)--++(0,2)--++(-1,0)--++(0,-2);}
\newcommand{\dominoh}[2]{\draw (#1,#2)--++(2,0)--++(0,1)--++(-2,0)--++(0,-1);}
\newcommand{\patchv}[2]{\dominoh{#1}{#2}\dominoh{#1}{#2+3}\dominov{#1}{#2+1}\dominov{#1+1}{#2+1}}
\newcommand{\patchh}[2]{\dominoh{#1+1}{#2+1}\dominoh{#1+1}{#2}\dominov{#1}{#2}\dominov{#1+3}{#2}}
\newcommand{\patchvbis}[2]{\patchh{#1}{#2}\patchh{#1}{#2+6}\patchv{#1}{#2+2}\patchv{#1+2}{#2+2}}
\newcommand{\patchhbis}[2]{\patchh{#1+2}{#2+2}\patchh{#1+2}{#2}\patchv{#1}{#2}\patchv{#1+6}{#2}}
\newcommand{\patchhter}[2]{\patchhbis{#1+4}{#2+4}\patchhbis{#1+4}{#2}\patchvbis{#1}{#2}\patchvbis{#1+12}{#2}}
\newtheorem{theorem}{Theorem}[section]
\newtheorem{lemma}[theorem]{Lemma}
\newtheorem{definition}[theorem]{Definition}
\theoremstyle{definition}
\newtheorem{remark}[theorem]{Remark}
\newtheorem{example}[theorem]{Example}
\title{Spectral measure at zero for self-similar tilings}
\author{Jordan Emme \thanks{Aix-Marseille Université, CNRS, Centrale Marseille, I2M, UMR 7373, 13453 Marseille, France. E-mail: jordan.emme@univ-amu.fr}}
\date{}
\begin{document}

\maketitle
\begin{abstract}The goal of this paper is to study the action of the group of translations over self-similar tilings in the euclidian space $\R^d$. It investigates the behaviour near zero of spectral measures for such dynamical systems. Namely, the paper gives a Hölder asymptotic expansion near zero for these spectral measures. It is a generalization to higher dimension of a result by Bufetov and Solomyak who studied self similar-suspension flows for substitutions in \cite{modcont}. The study of such asymptotics mostly involves the understanding of the deviations of some ergodic averages.
\end{abstract}

\section*{Keywords}

Self-similar tilings, ergodic theory, spectral measures.

\section*{Mathematic Subject Classification}

37B50  	Multi-dimensional shifts of finite type, tiling dynamics\\
37A30  	Ergodic theorems, spectral theory, Markov operators

\section{Introduction}

Spectral analysis plays an important role in dynamical systems and ergodic theory. Some properties of the spectrum of the Koopman operator, or of a group of operators acting on a space, translate into dynamical properties (in our case, the group $\R^d$ acts on the space of tilings and this defines a group of operators acting on the functions of the tiling space). In this light, spectral measures hold some amount of information concerning dynamical systems. Spectral analysis of some dynamical systems via spectral measures was done in \cite{queffelec} for instance.

In this paper we are interested in a particular family of dynamical systems which is extensively studied: those who arise from substitutions or self-similar tilings. Substitutions are essentially combinatorial objects and have no geometry a priori. However it is possible to interpret infinite words as tilings of the real line (or real semiline depending in which context we are working) and thus have some (limited) geometry. Self-similar tilings are objects that arise when looking for a natural generalization of this in higher dimension.

Bufetov and Solomyak have studied the deviation of ergodic averages for the action of $\R^d$ by translation over self-similar tilings in \cite{limitthm}. Such results generalize famous works by Bufetov for $d=1$ in \cite{finit_add_measure} or \cite{bufetov_annals}.

The precise study of the deviation of ergodic averages allows the understanding of the modulus of continuity for suspension flows over substitution dynamical systems in \cite{modcont}. One of the results is that there is a Hölder asymptotic expansion on balls centered at zero for the spectral measure of self-similar suspension flows (which we could see as self similar tilings of the real line). The Hölder exponent is explicitly computed.

The aim of this paper is to give a generalization of this result for spectral measures of self-similar tilings for $d\geq1$. We prove that there is always a Hölder asymptotic expansion regardless of the choice of $d$, and give an explicit computation of the Hölder exponent.

Section 2 is dedicated to defining self-similar tilings and their associated dynamical systems along with some basic properties.

Section 3 exposes some of the results that were obtained in \cite{limitthm,modcont} which are crucial to the proof of our main theorem. We also give the principal definition of the paper in this section: that of spectral measures.

In Section 4 we state the main result and give its proof based on the study of   the finitely-additive measures from \cite{limitthm} and the asymptotics of the ergodic integrals.

\subsection{Result}

In this article we study a particular class of tilings of the euclidian space $\R^d$ obtained by the following process. We fix a finite amount of tiles called prototiles. We endow the set of tilings of $\R^d$ by the prototiles with a topology detailed in Section 2. We choose a real constant $\lambda$ and consider the images of these prototiles by the homothety of factor $\lambda$. If we can tile these images with translates of the prototiles, then doing so defines a substitution rule $\zeta$. If this rule $\zeta$ satisfies certain conditions, then iterating it over a prototile spans bigger and bigger patches (i.e. union of tiles) which eventually tile the whole space. Taking the closure of the orbit of this tiling via the action of $\R^d$ by translation defines a space $\mathfrak{X}_{\zeta}$. Some  other conditions on $\zeta$ ensure that the dynamical system $(\mathfrak{X}_{\zeta}, \R^d)$ is uniquely ergodic and we denote its unique ergodic probability measure by $\mu$. Then to any real valued function $f$ from $\mathfrak{X}_{\zeta}$ to $\R$ in $L^1(\mathfrak{X}_{\zeta}, \mu)$, one can associate a spectral measure $\sigma_f$. The following theorem states that, after renormalisation, this measure is a Radon measure on balls centered at zero. This can be seen as a Hölder regularity property. The definitions of the technical conditions of this theorem \--- such as cylindrical functions and the quantity $m_{\Phi_v^-}(f)$ \---  are given respectively in  Definitions \ref{d.cylindrical} and \ref{d.mphimoins}.

\begin{theorem}\label{t.main}
 Let $\mathcal{A}=\{T_1,...,T_m\}$ be a set of polyhedral prototiles in $\R^d$ and $\zeta$ a primitive non-periodic tile-substitution over those prototiles defining a self-similar tiling with finite local complexity. Let $\lambda$ be the real expansion constant of $\zeta$, and $S$ be its incidence matrix with eigenvalues
 $$\theta_1 > \theta_2 > |\theta_3| \geq ... \geq |\theta_m|$$
 satisfying $\theta_2 > \theta_1^{\frac{d-1}{d}}$. Let $f$ be a cylindrical function with zero mean $\int f d\mu =0$ and $ m_{\Phi_v^-}(f) \neq 0$ (with $v$ being in the eigenspace associated to $\theta_2$). Let $\sigma_f$ be its associated spectral measure on $\R^d$. Then there exists a non-trivial positive $\sigma$-finite Radon measure $\eta$ on $\R_+$ such that:
 
 $$\lim_{N\rightarrow\infty} \frac{\sigma_f (B(0,a\lambda^{-N}))}{\lambda^{-N(2d - 2\alpha)}}=\eta([0,a]),\quad \text{for all $a>0$ such that $\eta(\{a\})=0$,}$$
 where 
 
 $$\alpha = \frac{d\log(\theta_2)}{\log(\theta_1)} \in (d-1,d).$$
\end{theorem}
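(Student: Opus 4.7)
The plan is to convert the question about the behaviour of $\sigma_f$ near zero into an $L^2$ asymptotic for ergodic integrals, which is the object that the deviation theory of \cite{limitthm,modcont} controls. Setting $S_R f(\omega) := \int_{B(0,R)} f(T_x\omega)\,dx$, the Parseval/Bochner identity reads
$$\|S_R f\|_{L^2(\mu)}^2 = \int_{\R^d} \left|\widehat{\mathbf{1}_{B(0,R)}}(\xi)\right|^2 d\sigma_f(\xi),$$
whose kernel concentrates, up to a factor $R^{2d}$, on the ball of radius $\sim 1/R$ around the origin. Taking $R = \lambda^N$, performing the change of variable $\xi = \lambda^{-N}\eta$, using $\theta_1 = \lambda^d$ and $\theta_2 = \lambda^\alpha$, and introducing the renormalized measures $\tilde\sigma_N(A) := \lambda^{N(2d-2\alpha)}\sigma_f(\lambda^{-N}A)$, one rewrites the identity as
$$\int_{\R^d} \left|\widehat{\mathbf{1}_{B(0,1)}}(\eta)\right|^2 d\tilde\sigma_N(\eta) = \lambda^{-2N\alpha}\|S_{\lambda^N} f\|_{L^2(\mu)}^2.$$
The quantity of interest in the theorem is exactly $\tilde\sigma_N(B(0,a))$, so the task reduces to showing that $\tilde\sigma_N(B(0,a)) \to \eta([0,a])$.

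Next I would invoke the deviation theory of \cite{limitthm,modcont}: under $\int f\,d\mu = 0$ and $m_{\Phi_v^-}(f)\neq 0$, the leading term of $\|S_{\lambda^N} f\|_{L^2(\mu)}^2$ is driven by the second eigenvalue and is of size $c\,\theta_2^{2N} = c\,\lambda^{2N\alpha}$, with a positive constant $c$ proportional to $|m_{\Phi_v^-}(f)|^2$. The spectral-gap assumption $\theta_2 > \theta_1^{(d-1)/d}$ enters precisely here: when $B(0,\lambda^N)$ is approximated by a union of level-$N$ supertiles, the boundary error is of surface order $\lambda^{N(d-1)} = \theta_1^{N(d-1)/d}$, and it is negligible against $\theta_2^N$ exactly under this hypothesis. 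Combined with the rescaled identity above, this produces a uniform-in-$N$ upper bound on $\int |\widehat{\mathbf{1}_{B(0,1)}}|^2\,d\tilde\sigma_N$; since the kernel is bounded below by a positive constant on a neighbourhood of the origin, the masses $\tilde\sigma_N(B(0,r))$ are uniformly bounded for every fixed $r$, and a Helly-type compactness argument extracts weakly convergent subsequences. To upgrade subsequential to full convergence and to identify $\eta$, I would use the self-similarity of $\zeta$: the substitution relates ergodic integrals over level-$(N+1)$ supertiles to those over level-$N$ supertiles through the incidence matrix $S$, and on the second eigenspace this action is multiplication by $\theta_2$; transported through the Parseval identity, it becomes a scaling relation between $\tilde\sigma_{N+1}$ and $\tilde\sigma_N$ that pins down the limit uniquely and yields the explicit form of $\eta$ in terms of $m_{\Phi_v^-}(f)$.

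The main obstacle is the feature genuinely new to $d\geq 2$. In the one-dimensional setting of \cite{modcont}, balls are intervals and hence patches of the tiling, so the substitution acts transparently on the integration domain; for $d \geq 2$, the ball $B(0,\lambda^N)$ is never a patch and must be approximated by supertile patches, with the surface-order boundary error controlled through exactly the threshold $\theta_2 > \theta_1^{(d-1)/d}$. Most of the technical work will go into setting up this geometric approximation so that the finitely-additive-measure machinery of \cite{limitthm} applies to balls rather than to supertiles. A secondary point is that in higher dimension $|\widehat{\mathbf{1}_{B(0,1)}}|^2$ is a Bessel-type function of the radius and has zeros, so extracting radial information on $[0,a]$ from the weighted integral requires care: the restriction to $a$ with $\eta(\{a\}) = 0$ is precisely what allows $\mathbf{1}_{[0,a]}$ to be approximated by test functions that avoid this zero set.
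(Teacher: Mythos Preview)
Your outline has the right ingredients (Parseval, the deviation theorem, the threshold $\theta_2>\theta_1^{(d-1)/d}$), but the step ``upgrade subsequential to full convergence via a scaling relation between $\tilde\sigma_{N+1}$ and $\tilde\sigma_N$'' does not work as stated. The only relation between $\tilde\sigma_{N+1}$ and $\tilde\sigma_N$ is the tautology $\tilde\sigma_{N+1}(A)=\lambda^{2d-2\alpha}\tilde\sigma_N(\lambda^{-1}A)$, which follows from the definition and carries no dynamical information; the substitution/incidence-matrix scaling on ergodic integrals, once pushed through Parseval, gives you convergence of the single number $\int|\widehat{\mathbf 1}_{B(0,1)}|^2\,d\tilde\sigma_N$, not a relation between the measures. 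Even if you vary the radius and obtain convergence of $\int|\widehat{\mathbf 1}_{B(0,1)}(a\,\cdot)|^2\,d\tilde\sigma_N$ for every $a>0$, you still owe a density/Tauberian argument showing that this one-parameter family of Bessel-type kernels (with their zeros) determines weak convergence of the $\tilde\sigma_N$; Helly compactness alone does not give uniqueness.

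The paper avoids this by testing $\sigma_f$ directly against smooth radial functions $|\mathrm{Rad}(\psi)(\lambda^N\,\cdot)|^2$ with $\psi\in\mathcal S_F(\R_+)$, rather than against the ball-indicator kernel. After the spectral isomorphism this becomes $\int_{\mathfrak X_\zeta}\big|\int_{\R^d}f\circ h_x\,\widehat{\mathrm{Rad}(\psi)}(x/R)\,dx\big|^2 d\mu$; a radial integration by parts turns the inner integral into $\int_0^\infty S(f,\mathcal T,\rho)\,\mathcal F(x_1\mathrm{Rad}(\psi))(\rho/R)\,d\rho$, so one needs the ergodic integral $S(f,\mathcal T,\rho)$ for \emph{all} radii, which is exactly what Theorem~\ref{t.erg_dev} supplies via $\Phi^+_{2,\mathcal T}(\rho)$. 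The genuine self-similarity that produces an exact (not subsequential) limit is $\Phi^+_{2,\mathcal T}(\lambda^N r)=\theta_2^N\Phi^+_{2,\zeta^{-N}\mathcal T}(r)$ from Lemma~\ref{l.selfsimilar}, combined with the $\zeta$-invariance of $\mu$ (Lemma~\ref{l.mu_zeta_inv}); this yields directly a positive bilinear form $Q(\psi_1,\psi_2)$ on $\mathcal S_F(\R_+)$, which is then identified with integration against a Radon measure $\eta$ and extended to $\chi_{[0,a]}$ at continuity points. Your ``main obstacle'' paragraph is also slightly off: the boundary-versus-bulk issue for balls is not handled ad hoc here but is already absorbed into Theorem~\ref{t.erg_dev} and Lemma~\ref{l.meas_bound}, which are stated for arbitrary Lipschitz domains.
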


This theorem is a natural generalization of \cite[Theorem 6.2]{modcont} which gives the same behaviour for $d=1$. 

Remark that in this theorem, we have:
$$
\theta_1=\lambda^d.
$$

\subsection{Outline of the paper}

Section \ref{s.tilings} is devoted to defining the framework. More precisely, we give the definition of self-similar tilings. We recall some famous results. In particular, that a primitive tile substitution $\zeta$ with finite local complexity defines a uniquely ergodic dynamical system $(\mathfrak{X}_{\zeta},\R^d)$, where $\mathfrak{X}_{\zeta}$ is the set of tilings of the euclidian space $\R^d$ obtained with the substitution rules. We also recall how to subdivide a certain tiling using the implicit substitution rules of $\zeta$.

With these subdivisions in mind, we recall in Section \ref{s.deviation} the construction of finitely-additive measures. For a thorough and detailed construction of such objects, we refer the reader to \cite{finit_add_measure}. These are capital for the understanding of ergodic deviations. Namely, we state the main theorem of \cite{limitthm} which gives a precise behaviour of Birkhoff sums depending on a decomposition of $\R^d$ in stable subspaces for the transpose of the incidence matrix of the tile-substitution $\zeta$.

Section \ref{s.spectral_measures} is devoted to proving Theorem \ref{t.main}. We start by stating the spectral theorem which allows us to define the spectral measures. The proof of the main theorem is then divided into 5 steps. In the first step we integrate a test function (which approximates the characteristic function of an interval) with respect to the spectral measure. The spectral isomorphism allows to express our integral in terms of ergodic sums on balls of radius $R$. The second step uses Theorem \ref{t.erg_dev} on ergodic deviations in order to estimate the rest of our ergodic sum as $R$ goes to infinity. Step 3 is just a change of variables in the main term, using the 'self-similarity' properties of the finitely-additive measures. We can then compute the limit as $R$ goes to infinity. Step 4 is devoted to the regularity of this limit and how to extend it to a bilinear form, right continuous on continuous compactly-supported functions. Step 5 introduces a technical lemma which allows us to make the link between our limit and distribution and then with Radon measures.

\subsection{Acknowledgements}
I would like to thank \textsc{Alexander Bufetov} and \textsc{Boris Solomyak} for asking me the question that this paper answers and for their very helpful advice and the enlightening discussions we had about this problem.

\section{Self-similar tilings}\label{s.tilings}

In all that follows, we are working in the euclidian space $\R^d$. We are interested in tiling the euclidian space $\R^d$ (which comes with its origin) with tiles which are compact, and the closure of their interior.  In particular, we are interested in a class of aperiodic tilings called self-similar tilings. They can be obtained by iteration of a substitution rule over tiles. A famous class of such tilings is found in \cite{penrose_pentaplexity}. We give an example of such a tiling in Figure \ref{f.tiling} before properly defining this notion. We almost always use the case of Figure \ref{f.tiling} to illustrate the numerous formal definitions in this section.

 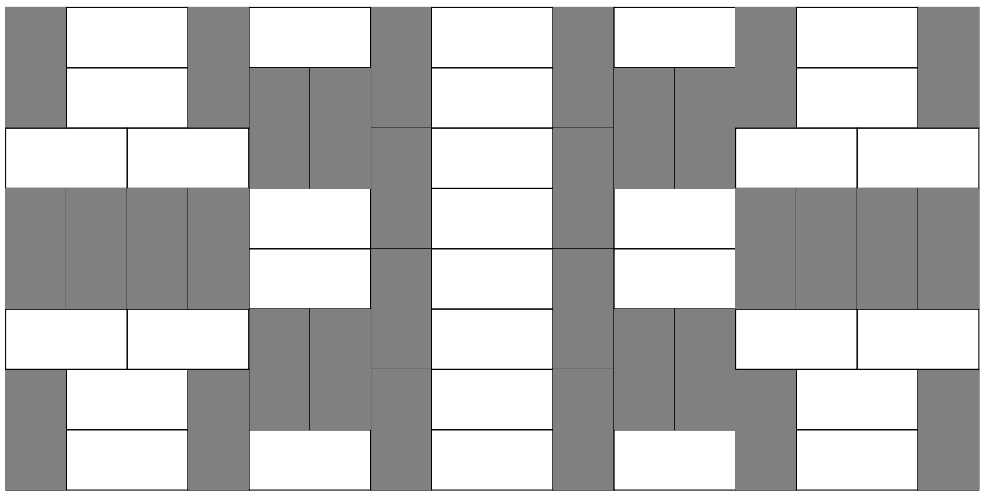
\begin{figure}[!ht]
\center
\begin{tikzpicture}[scale=0.8]
\patchhter{0}{0}
\end{tikzpicture}
\caption{Part of a self-similar tiling: the table tiling.}\label{f.tiling}
\end{figure}

Let us now give all the necessary formalisms.

\begin{definition}
Fix a set of types (sometimes called colours) $\{1,...,m\} \subset \N.$ A tile $T$ in $\R^d$ is a pair $(A,i)$ where $A=:\mathrm{supp}(T)$ (the support of $T$) is a closed compact subset of $\R^d$ such that $A=\overline{\mathring{A}}$ and $i\in\{1,...,m\}$ is the type of the tile.

We denote the type of the tile $T$ by $l(T).$
\end{definition}

\begin{definition}
Define the translate of a tile $T$ by a vector $x \in \R^d$:
$$
T+x:=\left(\mathrm{supp}(T)+x,l(T)\right).
$$
\end{definition}

Usually, we fix an `alphabet': a finite set of tiles $\mathcal{A}=\{T_1,...,T_m\}$. It is implied that the tile $T_i$ has type $i$ (but two distinct tiles in this alphabet do not necessarily have different support). The tiles $T_i$ are called prototiles.

After having fixed a finite set of prototiles we are interested in tilings of the euclidian space only with translates of these prototiles. To be formal, a tiling $\mathcal{T}$ of $\R^d$ by tiles in $\mathcal{A}=\{T_1,...,T_m\}$ is given by a covering of $\R^d$ in the following way:

$$
\R^d=\bigcup_{v \in V} \left[ \mathrm{supp}(T_{i_v})+v \right]
$$
and $\left(\mathring{\wideparen{\mathrm{supp}(T_{i_v})}}+v\right)\cap \left( \mathring{\wideparen{\mathrm{supp}(T_{i_w})}}+w\right)= \emptyset$ whenever $v\neq w$.

Here $V$ denotes a countable set of vectors in $\R^d$ and $i_v$ is in $\{1,...,m\}$ for any $v$ in $V$.

In other words, a tiling of the space with tiles in $\mathcal{A}$ is a covering of this space with translates of the prototiles in $\mathcal{A}$ in such a way that only the boundaries of the support of the tiles intersect.

In the case of Figure \ref{f.tiling}, a possible choice of prototiles is: $$\left\{\left( [-\frac12,\frac12]\times[-1,1],1 \right), \left( [-1,1]\times[-\frac12,\frac12],2 \right)\right\}.$$ For visual purposes, the tiles of type $1$ are coloured in grey and the tiles of type $2$ are coloured in white.

We now define patches.

\begin{definition}
 A patch $P$ is a finite union of tiles of disjoint interiors. The support of a patch $P$ is the set $\mathrm{supp}(P)=\displaystyle\bigcup_{T \in P}\mathrm{supp}(T)$. A patch $P$ has diameter at most $R$ if its support is contained in a ball of diameter $R$. Denote by $\mathcal{P}_{\mathcal{A}}$ the set of patches whose tiles are translates of prototiles in $\mathcal{A}$.
\end{definition}

Let us define tile substitutions.

\begin{definition}
 Let $\phi: \R^d \rightarrow \R^d$ be an homothety with expansion constant $\lambda >1$. A map $\zeta:\mathcal{A} \rightarrow \mathcal{P}_{\mathcal{A}}$, where $\mathcal{P}_{\mathcal{A}}$ denotes the set of patches with tiles in $\mathcal{A}$, is called a tile substitution with expansion $\phi$ if $ \mathrm{supp}(\zeta(T_i)) = \phi(\mathrm{supp}(T_i))$.
\end{definition}

\begin{example}\label{e.zeta}
 An example of tile substitutions using tiles from Figure \ref{f.tiling} is:
\begin{center}
 \begin{tikzpicture}[scale=0.8]
 \dominov{0}{0}
 \patchv{4}{-1}
 \dominoh{9}{0.5}
 \patchh{14}{-0}
  \draw[->][>=stealth] (1.5,1)--(3.5,1) node[midway,above]{$\zeta$} ;
 \draw (1.5,0.9)--(1.5,1.1);
  \draw[->][>=stealth] (11.5,1)--(13.5,1) node[midway,above]{$\zeta$} ;
 \draw (11.5,0.9)--(11.5,1.1);
\end{tikzpicture}
\end{center}
\end{example}

Notice that this substitution rule has expansion constant $\lambda=2$.

\begin{definition}
   For any tile substitution $\zeta$, define a substitution matrix $S$ such that $S_{i,j}$ is the number of tiles $T_i$ appearing in the patch $\zeta(T_j)$. 
\end{definition}

In the example given previously, the substitution matrix is $S=\begin{pmatrix}
  2&2\\
  2&2
\end{pmatrix}.$

 \begin{remark}\label{r.theta_1lambda}
   If $\theta_1$ denotes the Perron-Frobenius eigenvalue of $S$, we have $\theta_1 = \lambda^d$. We refer the reader to \cite{robinson} for more details.
 \end{remark}

 Remark that the Perron Frobenius eigenvalue of $\begin{pmatrix}
  2&2\\
  2&2
\end{pmatrix}$ is indeed $\theta=4=\lambda^2.$ 

\begin{remark}\label{r.primitive}
 In all that follows, we only study primitive tile substitutions, \textit{i.e.} tile-substitutions whose matrix $S$ satisfies the property that there exists a positive integer $k$ such that $S^k$ has only positive entries.
\end{remark}

We extend the definition of $\zeta$ to $\mathcal{P}_{\mathcal{A}}$ and to the set of tilings of $\R^d$ by tiles in $\mathcal{A}$.
 
 In order to extend the definition to the set of patches $\mathcal{P}_{\mathcal{A}}$, we first define the application $\zeta$ with real expansion constant $\lambda$ on the set of tiles which are translates of prototiles in $\mathcal{A}$. Let $T$ be a translate of prototile $T_i$ for a certain $i$ by a vector $x \in \R^d$, i.e. $T=T_i+x$. Then we define $\zeta(T)=\zeta(T_i) +\lambda x$. Now in order to define $\zeta$ on the set of patches we proceed as follows.
 
 Let $P$ be a patch in $\mathcal{P}_{\mathcal{A}}$. There exists a finite set of vectors $\{v_1,...,v_n\}$ in $\R^d$ and a finite set of integers $\{i_1,...,i_n\}$ in $\{1,...,m\}$ such that
 $$
 P=\bigcup_{j=1}^m T_{i_j}+v_j.
 $$
 Then define the patch $\zeta(P)$ in the following manner
 $$
  \zeta(P)=\bigcup_{j=1}^m \zeta(T_{i_j})+\phi(v_j).
 $$
 
 We extend the definition of $\zeta$ to a whole tiling of $\R^d$ in the same way.
 
 \begin{example}
  For instance, expanding the definition of the substitution given in Example \ref{e.zeta} to patches is illustrated in Figure \ref{f.subs_on_patches}.
  
  \begin{figure}[!ht]
 \begin{tikzpicture}[scale=0.8]
 \dominov{0}{0}
 \dominov10
 \patchv{4}{-1}
 \patchv{6}{-1}
 \dominoh{10}{1}
  \dominoh{10}{0}
 \patchh{14}{1}
  \patchh{14}{-1}
  \draw[->][>=stealth] (2.5,1)--(3.5,1) node[midway,above]{$\zeta$} ;
 \draw (2.5,0.9)--(2.5,1.1);
  \draw[->][>=stealth] (12.5,1)--(13.5,1) node[midway,above]{$\zeta$} ;
 \draw (12.5,0.9)--(12.5,1.1);
  \draw[very thick] (0,0)--++(1,0)--++(0,2)--++(-1,0)--++(0,-2);
  \draw[very thick] (1,0)--++(1,0)--++(0,2)--++(-1,0)--++(0,-2);
 \draw[very thick] (10,1)--++(2,0)--++(0,1)--++(-2,0)--++(0,-1);
  \draw[very thick] (10,0)--++(2,0)--++(0,1)--++(-2,0)--++(0,-1);
  
 \draw[very thick] (4,-1)--++(2,0)--++(0,4)--++(-2,0)--++(0,-4);
  \draw[very thick] (6,-1)--++(2,0)--++(0,4)--++(-2,0)--++(0,-4);
 \draw[very thick] (14,1)--++(4,0)--++(0,2)--++(-4,0)--++(0,-2);
  \draw[very thick] (14,-1)--++(4,0)--++(0,2)--++(-4,0)--++(0,-2);
\end{tikzpicture}\caption{Substitution rule on some patches}\label{f.subs_on_patches}
  \end{figure}
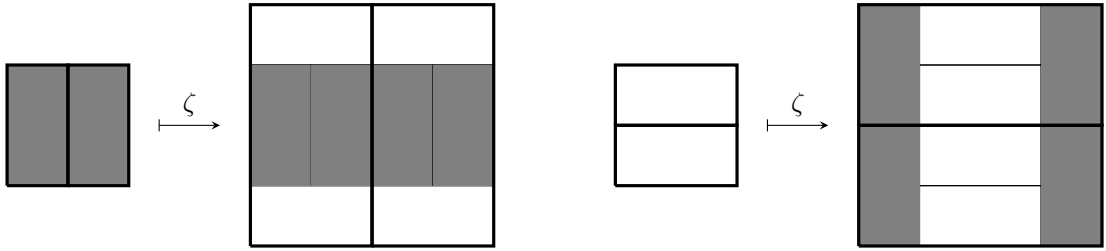
 \end{example}

 \begin{definition}
  A tiling $\tiling$ of $\R^d$ is self similar if $\zeta(\tiling) =\tiling$ for a primitive tile substitution $\zeta$.
 \end{definition}

%
%
%


 \begin{definition}
Given a tile-substitution $\zeta$, we define the tiling space $\mathfrak{X}_{\zeta}$:\\
A tiling $\tiling$ of $\R^d$ is in $\mathfrak{X}_{\zeta}$ if every finite patch of $\tiling$ is a sub-patch of $\zeta^k(\tiling_i)$ for some $k \in \N$ and some $i \in \llbracket1,m\rrbracket$.
 \end{definition}

 We endow this set with a topology by defining the metric $d$:
 
 let $\mathcal{T}_1$ and $\mathcal{T}_2$  be two tilings in $\mathfrak{X}_{\zeta}$. We define $d(\mathcal{T}_1,\mathcal{T}_2)=\min\left( \frac{1}{\sqrt2}, \epsilon_0 \right)$ where $\epsilon_0$ is the infimum of the set of $\epsilon>0$ which satisfy the following condition: there exists a vector $v$  in $\R^d$ such that $\|v\|<\epsilon$ (where $\|v\|$ is the euclidian norm) and the biggest patches of $\mathcal{T}_1$ and $\mathcal{T}_2+v$ inside the ball $B(0,1/\epsilon)$ are the same.

  In other words, two tilings are close if, up to a small translation, one can see the same pattern in a large ball centered at the origin. For more details on this distance and topology the reader can refer to \cite{robinson}.
 
 \begin{definition}
  Let $\mathcal{A}=\{T_1,...,T_m\}$ be an alphabet. Let $\zeta$ be a tile substitution with expansion map $\phi$. A tile $T$ is said to be a tile of order $k\in \Z$ if there exist $i \in \llbracket1,m\rrbracket$ and $y \in \R^d$ such that $\mathrm{supp}(T)-y=\phi^k(\mathrm{supp}(T_i))$
 \end{definition}
 
 \begin{example}
  We draw in thick black lines the boundary of tiles of order 3 in the example of tiling given in Figure \ref{f.tiling}.
 \begin{figure}[!ht]
\center
\begin{tikzpicture}[scale=0.8]
\patchhter{0}{0}
 \draw[ultra thick] (0,0)--++(4,0)--++(0,8)--++(-4,0)--++(0,-8);
  \draw[ultra thick] (12,0)--++(4,0)--++(0,8)--++(-4,0)--++(0,-8);
 \draw[ultra thick] (4,0)--++(8,0)--++(0,4)--++(-8,0)--++(0,-4);
  \draw[ultra thick] (4,4)--++(8,0)--++(0,4)--++(-8,0)--++(0,-4);
\end{tikzpicture}
\end{figure}

 \end{example}

 \begin{definition}
  We extend the definition of the tile-substitution $\zeta$ to tiles of order $k \in \Z$ and denote it $\phi^k \zeta$:
  $$
  \phi^k \mathcal{A}:=\{(\phi^k\mathrm{supp}(T_i),i)\}_{i \in \llbracket1,m\rrbracket},
  $$  
  $$
  \forall i \in \llbracket1,m\rrbracket,\ (\phi^k \zeta)((\phi^k(\mathrm{supp}(T_i)),i)):=\phi^k(\mathrm{supp}(\zeta(T_i))).
  $$
 \end{definition}

This allows to define the subdivision map:
$$
\Upsilon_k:\mathfrak{X}_{\phi^k\zeta}\mapsto\mathfrak{X}_{\phi^{k-1}\zeta}.
$$
 This divides every tile of order $k$ into sub-tiles according to the substitution rule. Note that is  well defined if and only if the substitution is non-periodic (that is, if $\R^d$ acts freely over $\mathfrak{X}_\zeta$) as justified by the following theorem.
 
 \begin{theorem}[\cite{solomyak_injectivity}]
  The map $\zeta:\mathfrak{X}_{\zeta}\rightarrow\mathfrak{X}_{\zeta}$ is injective if and only if $\zeta$ is a non-periodic substitution.
 \end{theorem}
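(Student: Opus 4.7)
The plan is to prove both directions of the equivalence by contraposition, leveraging the primitivity of $\zeta$ from Remark~\ref{r.primitive} and the minimality of $(\mathfrak{X}_\zeta,\R^d)$ that it implies.

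\textbf{Periodic implies not injective.} Suppose some $\tiling\in\mathfrak{X}_\zeta$ has a nonzero period. By minimality, every tiling in $\mathfrak{X}_\zeta$ shares the same group of periods, a non-trivial discrete subgroup $\Lambda\subset\R^d$ (discreteness comes from FLC and the Delone property of tile centres). The equivariance identity $\zeta(\tiling+w)=\zeta(\tiling)+\lambda w$ combined with $\zeta(\tiling)\in\mathfrak{X}_\zeta$ gives $\lambda\Lambda\subseteq\Lambda$; this inclusion must be strict, because $\lambda\Lambda=\Lambda$ would yield $v/\lambda^n\in\Lambda$ for every $v\in\Lambda$ and every $n\geq 0$, producing elements of $\Lambda$ of arbitrarily small norm and contradicting discreteness of $\Lambda\neq\{0\}$. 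Pick any $v\in\Lambda\setminus\lambda\Lambda$ and set $w=v/\lambda$; then $w\notin\Lambda$ ensures $\tiling+w\neq\tiling$, while $\zeta(\tiling+w)=\zeta(\tiling)+v=\zeta(\tiling)$ because $v\in\Lambda$ is a period of $\zeta(\tiling)$. Hence $\zeta$ fails to be injective.

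\textbf{Not injective implies periodic.} Conversely, suppose $\zeta(\tiling_1)=\zeta(\tiling_2)=:\tau$ with $\tiling_1\neq\tiling_2$. Each $\tiling_j$ partitions $\tau$ into first-order supertiles (the patches $\zeta(T)$ for $T\in\tiling_j$), and since $\tiling_1\neq\tiling_2$ the two partitions of the same underlying tiling $\tau$ are distinct. The strategy is to show that they differ by a single global translation $v\in\R^d\setminus\{0\}$, equivalently that $\tiling_2=\tiling_1+v$. Once this is granted, $\zeta(\tiling_1)=\zeta(\tiling_1+v)=\zeta(\tiling_1)+\lambda v$ exhibits $\lambda v$ as a nonzero period of $\tau\in\mathfrak{X}_\zeta$, which by minimality makes every tiling in $\mathfrak{X}_\zeta$ periodic.

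\textbf{Main obstacle.} The delicate content is the rigidity of supertile decompositions. The local analysis is tractable: pick a tile $T$ of $\tau$ whose enclosing supertiles $S_1(T)$ and $S_2(T)$ differ in the two decompositions; by finite local complexity the pairs of supertiles that can contain a common small tile at a common location form a finite list, so the candidate offset vector $v$ takes values in a finite set. The hard step is globalisation: one must propagate the offset chosen at $T$ to every other tile of $\tau$. This is achieved by matching the boundary shared between $S_1(T)$ and a neighbouring supertile with the corresponding boundary in the second decomposition, which pins down the neighbour's offset to coincide with $v$; iterating this argument by adjacency and covering $\R^d$ by such supertile-connected chains, together with primitivity and FLC to exclude the possibility of offsets jumping from one region to another, yields global constancy of $v$. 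Ruling out such offset jumps and making the propagation argument precise is the non-trivial combinatorial heart of Solomyak's proof.
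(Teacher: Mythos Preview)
The paper does not prove this theorem at all: it is quoted verbatim from \cite{solomyak_injectivity} and used as a black box to justify that the subdivision map $\Upsilon_k$ is well defined. There is therefore nothing in the paper to compare your argument against.

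On the substance of your sketch: the easy direction (periodic $\Rightarrow$ not injective) is carried out correctly and completely; the argument with $\lambda\Lambda\subsetneq\Lambda$ and $w=v/\lambda$ is clean. For the converse, you have correctly located the real difficulty---unique composition up to a single global shift---but you have not proved it: the ``propagation'' paragraph is a plan, not an argument, and you explicitly defer the crux to ``Solomyak's proof.'' In particular, the claim that two distinct supertile decompositions of the same tiling must differ by one global translation $v$ is exactly the nontrivial theorem of \cite{solomyak_injectivity}; the boundary-matching heuristic you describe does not by itself rule out that the local offset varies across the plane, and making this rigorous requires the full combinatorial analysis in Solomyak's paper. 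So your write-up is a faithful outline of the original strategy rather than an independent proof.
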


 With this division, we define inductively, for any  tiling $\mathcal{T} \in \mathfrak{X}_{\zeta}$ and for any integer $k$, the subdivided tiling $\mathcal{T}^{(k)}$ in the following way:
$$
\mathcal{T}^{(0)}:=\mathcal{T},
$$

$$
\mathcal{T}^{(k)}=\left\{\begin{array}{ccc}
                          \Upsilon_k^{-1}...\Upsilon_1^{-1}(\mathcal{T}), &\text{if} & k>0, \\
                          \Upsilon_{k+1}...\Upsilon_0(\mathcal{T}), &\text{if} & k<0,
                         \end{array}\right.\mathcal{T}^{(k)} \in \mathfrak{X}_{\phi^k\zeta}.
$$

The space of tilings $i \in \llbracket1,m\rrbracket$ by tiles of order $k$ is essential for the introduction of finitely additive measures in the next section.

Let $\zeta$ be a primitive tile substitution in $\R^d$ over the finite set of tiles $\mathcal{A}=\{T_1,...,T_m\}$ and $S$ its incidence matrix. We denote by $\theta_1> |\theta_2|\geq...\geq |\theta_m|$ the eigenvalues of the matrix $S^t$ ($\theta_1$ being real, greater than one, and of multiplicity one by Perron-Frobenius Theorem). Remark that $\R^d$ acts on $\mathfrak{X}_{\zeta}$ by translation. For any vector $x \in \R^d$ let $h_x$ be the map from $\mathfrak{X}_{\zeta}$ to itself that sends a tiling $\mathcal{T}$ to its translate by vector $-x$ i.e. $\mathcal{T}-x$. This defines a continuous action of $\R^d$ on $\mathfrak{X}_{\zeta}$ by translation.

 This defines a dynamical system $(\mathfrak{X}_{\zeta},\R^d)$. Remark also that iterating $\zeta$ over a certain tile may converge towards a tiling of the space $\R^d$. This tiling is of course in the tiling space $\mathfrak{X}_{\zeta}$.

As already stated in Remark \ref{r.primitive}, it is usual to only consider the case of primitive tile substitutions, which is what is done in this article, but we also add the classical property of finite local complexity.

\begin{definition}
 A tiling $\mathcal{T}$ has finite local complexity (FLC) if, for any positive real number $r$, there is only a finite number of different patches of diameter at most $r$ in $\mathcal{T}$ up to translations.
 
 A tile substitution $\zeta$ is FLC if every tiling in the tiling space $\mathfrak{X}_{\zeta}$ has finite local complexity.
\end{definition}

\begin{remark}
 For the most part, classical self-similar tilings defined by tile substitutions are FLC. For an example of non-FLC tile substitution we refer the reader to \cite{kenyon_nonflc} for instance.
\end{remark}

\begin{theorem}{\cite{robinson}}
 If $\zeta$ is a primitive substitution with finite local complexity then $(\mathfrak{X}_{\zeta},\R^d)$ is uniquely ergodic. We denote by $\mu$ the unique ergodic probability measure of the dynamical system $(\mathfrak{X}_{\zeta},\R^d)$.
\end{theorem}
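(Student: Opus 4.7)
The plan is to establish unique ergodicity via an Oxtoby-type criterion: for every continuous function $f:\mathfrak{X}_{\zeta}\to\R$, the spatial ergodic averages
$$A_R f(\mathcal{T}):=\frac{1}{|B(0,R)|}\int_{B(0,R)} f(\mathcal{T}-x)\,dx$$
should converge as $R\to\infty$ to a constant independent of $\mathcal{T}$, uniformly in $\mathcal{T}\in\mathfrak{X}_{\zeta}$. By the Riesz representation theorem together with compactness of $\mathfrak{X}_{\zeta}$, this uniform convergence forces the existence of a unique translation-invariant probability measure $\mu$, which is then the unique ergodic measure for $(\mathfrak{X}_{\zeta},\R^d)$.

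First I would reduce to patch-counting. Finite local complexity implies that $\mathfrak{X}_{\zeta}$ is compact and that the \emph{local} functions---those that depend on $\mathcal{T}$ only through the finite patch of $\mathcal{T}$ meeting some fixed ball around the origin---are dense in $C(\mathfrak{X}_{\zeta})$. It therefore suffices to prove, for every admissible finite patch $P$, that the frequency
$$\mathrm{freq}(P,\mathcal{T})=\lim_{R\to\infty}\frac{\#\{v\in\R^d:P+v\subset\mathcal{T},\ \mathrm{supp}(P+v)\subset B(0,R)\}}{|B(0,R)|}$$
exists, is positive, and does not depend on $\mathcal{T}$, with uniform convergence in $\mathcal{T}$.

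Next, I would exploit the hierarchical structure supplied by the subdivision maps $\Upsilon_k$. Every $\mathcal{T}\in\mathfrak{X}_{\zeta}$ admits a tower $(\mathcal{T}^{(k)})_{k\geq 0}$ of coarser tilings by super-tiles of order $k$, obtained by recognising patches of $\mathcal{T}$ as images under $\zeta^{k}$ (well-defined thanks to the injectivity theorem already cited). Primitivity of $S$ gives, via Perron--Frobenius, a simple dominant eigenvalue $\theta_1=\lambda^d$ with strictly positive left and right eigenvectors, and $\theta_1^{-k}S^{k}$ converges to a rank-one matrix as $k\to\infty$. This yields the asymptotic proportion of each prototile type inside $\zeta^{k}(T_j)$, with a limit independent of $j$. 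Primitivity also ensures that every fixed patch $P$ appears as a sub-patch of $\zeta^{k}(T_j)$ for all $j$ and all sufficiently large $k$; an induced Perron--Frobenius-style recurrence for the number of occurrences of $P$ inside a super-tile of order $k$ then delivers a well-defined frequency $\mathrm{freq}(P)$, independent of the surrounding tiling.

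The main obstacle is controlling the boundary effect: a Euclidean ball $B(0,R)$ generally cuts super-tiles of order $k$ along their boundaries, so the count of $P$-occurrences inside $B(0,R)$ differs from a clean sum over complete super-tiles by a boundary correction. Choosing $k=k(R)$ so that $\lambda^{k}$ grows slower than $R$, one bounds the volume occupied by super-tiles straddling $\partial B(0,R)$ by a quantity of order $R^{d-1}\lambda^{k}$, which is negligible compared to $|B(0,R)|\sim R^{d}$ as soon as $\lambda^{k}=o(R)$. Finite local complexity, combined with the polyhedral shape of prototiles, provides the geometric regularity needed to turn this heuristic into a uniform estimate. Assembling these ingredients yields uniform convergence of $A_R f$ for every local, hence every continuous, function $f$, and therefore unique ergodicity.
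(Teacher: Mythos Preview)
The paper does not actually prove this theorem: it is stated with a citation to \cite{robinson} and used as a black box, so there is no ``paper's own proof'' to compare against. Your sketch is essentially the standard argument one finds in that reference (and in the broader literature on substitution tilings): reduce to uniform existence of patch frequencies via density of local functions, use the hierarchical super-tile structure together with Perron--Frobenius on the primitive matrix $S$ to get the frequencies, and control boundary effects by a van Hove-type estimate.

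Two small remarks. First, you invoke the polyhedral shape of prototiles for the boundary estimate, but the statement as given does not assume polyhedral tiles; finite local complexity alone is what guarantees that the number of tiles (of any fixed order $k$) meeting $\partial B(0,R)$ is $O(R^{d-1})$, so you should phrase the boundary control in terms of FLC rather than polyhedrality. Second, your appeal to ``an induced Perron--Frobenius-style recurrence for the number of occurrences of $P$ inside a super-tile of order $k$'' hides a genuine step: one usually passes to a \emph{collared} substitution (or equivalently works with a higher power of $\zeta$ and a refined alphabet) so that occurrences of $P$ straddling adjacent super-tiles are accounted for. Without this, the count of $P$ in $\zeta^{k+1}(T_j)$ is not simply a linear combination of the counts in the $\zeta^{k}(T_i)$. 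This is routine, but worth making explicit if you are writing out a proof rather than a sketch.
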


This theorem has as an immediate consequence the following lemma.

\begin{lemma}{\cite{robinson}}\label{l.mu_zeta_inv}
 If $\zeta$ is a primitive substitution with finite local complexity, then $\mu$ is invariant under the substitution action, i.e. $\zeta_*\mu:=\mu\circ \zeta^{-1}=\mu$.
\end{lemma}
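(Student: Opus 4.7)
The plan is to leverage the unique ergodicity theorem just stated: it suffices to show that $\zeta_*\mu$ is an $\R^d$-invariant Borel probability measure on $\mathfrak{X}_\zeta$, since by unique ergodicity it must then coincide with $\mu$.

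First I would note that $\zeta:\mathfrak{X}_\zeta\to\mathfrak{X}_\zeta$ is well-defined and continuous for the tiling metric $d$ introduced in Section \ref{s.tilings} (two close tilings agreeing on a large ball around $0$ get mapped to tilings agreeing on a $\lambda$-times larger ball, up to a proportionally smaller translation). In particular $\zeta$ is Borel measurable, so $\zeta_*\mu$ is a well-defined Borel probability measure on $\mathfrak{X}_\zeta$.

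The key step is to establish the intertwining relation between $\zeta$ and the translation action. From the extension $\zeta(T_i+x)=\zeta(T_i)+\phi(x)$ with $\phi(x)=\lambda x$, and from the way $\zeta$ was extended to patches and then to entire tilings by acting tile-by-tile, one checks directly that
\[
\zeta(\mathcal{T}-x)=\zeta(\mathcal{T})-\lambda x,\qquad \forall\, x\in\R^d,\ \mathcal{T}\in\mathfrak{X}_\zeta.
\]
In the notation $h_x(\mathcal{T})=\mathcal{T}-x$, this is the conjugacy $\zeta\circ h_x = h_{\lambda x}\circ\zeta$. Combined with the $\R^d$-invariance $(h_y)_*\mu=\mu$ for every $y\in\R^d$, we get, for each $x\in\R^d$,
\[
(h_x)_*(\zeta_*\mu)=(h_x\circ\zeta)_*\mu=(\zeta\circ h_{x/\lambda})_*\mu=\zeta_*\bigl((h_{x/\lambda})_*\mu\bigr)=\zeta_*\mu,
\]
so $\zeta_*\mu$ is $\R^d$-invariant. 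Unique ergodicity then forces $\zeta_*\mu=\mu$.

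The only point requiring genuine verification is the intertwining identity $\zeta\circ h_x=h_{\lambda x}\circ\zeta$; this is purely formal but one needs to chase through the way $\zeta$ was defined on translates of tiles, on patches, and finally on tilings. The rest is a one-line consequence of the preceding unique ergodicity theorem, so I do not anticipate any serious obstacle.
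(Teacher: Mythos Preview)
Your argument is correct and is exactly the paper's approach: show $\zeta_*\mu$ is $\R^d$-invariant via the intertwining $\zeta\circ h_x=h_{\lambda x}\circ\zeta$ (which the paper phrases as ``$\zeta$ acts on vectors as an homothety of ratio $\lambda$''), then invoke unique ergodicity. The only difference is that you spell out the pushforward computation and the continuity of $\zeta$, whereas the paper leaves these as an ``easy check''.
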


\begin{proof}
 It is an easy check that the measure $\zeta_*\mu$ is invariant under the action of $\R^d$ since $\zeta$ acts on vectors as an homothety of ratio $\lambda$ and $\mu$ is invariant under the action of $\R^d$ by translation. Hence, by unique ergodicity, $\zeta_*\mu=\mu$.
\end{proof}

\section{Finitely-additive measures and Ergodic deviation}\label{s.deviation}

We now define finitely-additive measures as done in \cite{limitthm}.

\begin{definition}
 The rapidly expanding subspace $E^{++}$ is the linear span of Jordan cells of $S^t$ whose eigenvalues $\theta$ satisfy
 $$
 |\theta|>\theta_1^{\frac{d-1}{d}}=\lambda^{d-1}.
 $$
 \end{definition}
In a similar way, denote by $E^+$ the linear span of Jordan cells of $S^t$ whose eigenvalues are greater than 1. Equivalently, denote $\widetilde E^{++}$ the rapidly expanding subspace of $S$.

 We define, for any vector $v \in E^+$ and any tiling $\mathcal{T}\in\mathfrak{X}_{\zeta}$ on tiles of order $k\in \Z$, a finitely-additive $\Phi^+_{v,\mathcal{T}}$ measure as follows:
$$
 \Phi^+_{v,\mathcal{T}}(\mathrm{supp}(T))=((S^t)^k v)_j,\ \text{if}\  \exists k \in \Z, \ y \in \R^d \:\ T= \phi^k(T_j)-y\in \mathcal{T}^{(k)}.
$$

The definition of such finitely additive measures can be extended to Lipschitz domains (that is to say open sets of $\R^d$ whose boundaries are locally graphs of continuous Lipschitz functions). This is done in \cite{limitthm}.

The following lemma gives a ``self-similarity'' property for finitely-additive measures after their extension to any Lipschitz domains.

\begin{lemma}{\cite[Lemma 3.2]{limitthm}}\label{l.selfsimilar}
For any $v \in E^{++}$, there exists finitely-additive measures $\Phi_{v,\mathcal{T}}^+$ defined on the ring of sets generated by Lipschitz domain in $\R^d$. Moreover, if  $S^tv=\theta v$, they satisfy, for any  Lipschitz domain $\Omega$, the following:
$$
\quad \Phi_{v,\zeta(\mathcal{T})}^+(\phi(\Omega))=\Phi_{\theta v,\mathcal{T}}^+(\Omega)=\theta \Phi_{v,\mathcal{T}}^+(\Omega).
$$
 \end{lemma}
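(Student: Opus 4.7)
The plan is to build $\Phi^+_{v,\mathcal{T}}$ in two stages: first on the ring of finite unions of tiles (where the formula in the text already dictates everything by additivity), then on the ring generated by Lipschitz domains via a Cauchy limit. The hypothesis $v\in E^{++}$ enters only in the second stage; the self-similarity will then be transported from tiles to Lipschitz domains by continuity of the construction.

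For the first stage, any finite patch $P$ of tiles of order $k$ in $\mathcal{T}^{(k)}$ is assigned the sum of the values $((S^t)^{k}v)_j$ of its constituents, and consistency across orders is built in: subdividing a tile of order $k$ and type $j$ into its $S_{ij}$ tiles of order $k-1$ of each type $i$ replaces $((S^t)^{k}v)_j=\sum_iS_{ij}((S^t)^{k-1}v)_i$ by the sum of the sub-values. Thus $\Phi^+_{v,\mathcal{T}}(P)$ is well-defined on patches and linear in $v$. For the second stage, approximate a Lipschitz domain $\Omega$ from the inside by the patch $P_n\subset\Omega$ consisting of all tiles of order $-n$ of $\mathcal{T}^{(-n)}$ contained in $\Omega$. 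Because $\partial\Omega$ is Lipschitz and tiles of order $-n$ have diameter $O(\lambda^{-n})$, the symmetric difference $\Omega\triangle P_n$ is covered by $O(\lambda^{n(d-1)})$ tiles of order $-n$. Each such tile contributes a value whose modulus is bounded, via the Jordan decomposition of $S^t$, by $Cn^{d}|\theta|^{-n}$ on the generalized eigenspace of an eigenvalue $\theta$; since $v\in E^{++}$ we may restrict to $|\theta|>\lambda^{d-1}$. The boundary mass is then $O(n^{d}(\lambda^{d-1}/|\theta|)^n)\to 0$, so $(\Phi^+_{v,\mathcal{T}}(P_n))_n$ is Cauchy, and we declare its limit to be $\Phi^+_{v,\mathcal{T}}(\Omega)$. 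Independence of the limit from the particular approximating sequence and finite additivity on the generated ring both follow from the same boundary estimate applied to symmetric differences of two approximations.

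Self-similarity is then almost automatic. Linearity in $v$, preserved by the Cauchy limit, immediately gives $\Phi^+_{\theta v,\mathcal{T}}=\theta\Phi^+_{v,\mathcal{T}}$ whenever $S^tv=\theta v$. For the scaling under $\phi$, observe that $\phi$ maps tiles of order $k$ in $\mathcal{T}^{(k)}$ bijectively to tiles of order $k+1$ in $\zeta(\mathcal{T})^{(k+1)}$, preserving the type label. Hence for a tile $T=\phi^{k}(T_j)+y$ in $\mathcal{T}^{(k)}$,
$$\Phi^+_{v,\zeta(\mathcal{T})}(\phi(T))=((S^t)^{k+1}v)_j=((S^t)^kS^tv)_j=\Phi^+_{S^tv,\mathcal{T}}(T)=\theta\,\Phi^+_{v,\mathcal{T}}(T).$$
Extending to arbitrary Lipschitz $\Omega$ requires only that $\phi$ sends the approximating patches of $\Omega$ (built from tiles of order $-n$ in $\mathcal{T}$) to approximating patches of $\phi(\Omega)$ (tiles of order $-n+1$ in $\zeta(\mathcal{T})$), which is exactly the behaviour of $\phi$ under the subdivision maps $\Upsilon_k$.

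The main obstacle is the boundary estimate in the second stage: one needs a genuinely quantitative control on the number of tiles of a prescribed order that meet $\partial\Omega$, together with a sharp Jordan-block bound for $(S^t)^{-n}$ on the generalized eigenspace of an eigenvalue $\theta$. The hypothesis $v\in E^{++}$ is the precise condition making these two bounds combine into a geometrically convergent series; a vector with a component along an eigenvalue of modulus $\le\lambda^{d-1}$ would give a boundary contribution that either does not vanish or is not even bounded, so no extension of $\Phi^+_{v,\mathcal{T}}$ to Lipschitz domains as a finitely-additive functional would be possible by this route.
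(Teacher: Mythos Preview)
The paper does not prove this lemma; it is quoted verbatim from \cite[Lemma~3.2]{limitthm} and no argument is supplied here. Your two-stage construction (well-definedness on patches via the consistency relation $((S^t)^{k}v)_j=\sum_i S_{ij}((S^t)^{k-1}v)_i$, then a Cauchy limit over inner approximations by tiles of order $-n$ using the boundary count $O(\lambda^{n(d-1)})$ against the decay $|(S^t)^{-n}v|=O(n^{s-1}|\theta|^{-n})$) is exactly the construction carried out in \cite{limitthm}, and your derivation of the self-similarity identity from the tile case by transporting the approximating patches under $\phi$ is likewise the argument used there. So the proposal is correct and agrees with the source the paper defers to.

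One small slip: in your bound $Cn^{d}|\theta|^{-n}$ the exponent on $n$ should be governed by the size of the Jordan block containing $v$, not by the ambient dimension $d$ (these are unrelated integers). This does not affect the argument, since any polynomial prefactor is absorbed by the geometric factor $(\lambda^{d-1}/|\theta|)^{n}$, but you should write $n^{s-1}$ (or simply ``a polynomial in $n$'') to keep the statement accurate.
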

 
\begin{lemma}\cite[Lemma 3.3]{limitthm}\label{l.meas_bound}
Suppose that $v \in E^{++}$ belongs to the $S^t$ invariant subspace corresponding to a Jordan block of size $s\geq1$ with eigenvalue $\theta$ and such that $\|v\|=1$. Then for a Lipschitz domain $\Omega$, there exists a positive constant $C_1$ such that, for any positive real number $R$ (writing $\Omega_R=R\Omega$),
$$
\quad \left| \Phi_{v,\mathcal{T}}^+(\Omega_R) \right| \leq C_1(\log R)^{s-1}R^{\alpha}, \quad \text{where} \quad \alpha=\frac{d \log |\theta|}{\log \theta_1}.
$$
\end{lemma}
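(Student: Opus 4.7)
The plan is to combine the self-similarity of Lemma \ref{l.selfsimilar} with the Lipschitz regularity of $\partial\Omega$ by approximating $\Omega_R$ hierarchically by unions of tiles of various orders. On a single tile $T$ of order $k$ and type $j$, the definition gives $\Phi^+_{v,\mathcal{T}}(T)=((S^t)^k v)_j$, and the Jordan form of $S^t$ yields $\|(S^t)^k v\|_\infty \leq C\, k^{s-1} |\theta|^k$ because $v$ lies in an $s$-dimensional Jordan block with eigenvalue $\theta$. The FLC hypothesis guarantees uniform bounds, independent of the ambient tiling $\mathcal{T}$, of the form $\mathrm{diam} \leq C\lambda^{k}$ and $\mathrm{vol} \geq c\lambda^{kd}$ for every tile of order $k$, with constants depending only on $\mathcal{A}$ and $\zeta$.

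Next I would build the hierarchy. Set $k_0 := \lfloor \log R / \log \lambda \rfloor$, so that $\lambda^{k_0} \asymp R$. For each $0 \leq j \leq k_0$ let $U_j \subseteq \Omega_R$ denote the union of the tiles of $\mathcal{T}^{(j)}$ entirely contained in $\Omega_R$. Any point of $\Omega_R \setminus U_j$ lies in some order-$j$ tile that meets $\partial\Omega_R$, hence sits in the $O(\lambda^j)$-neighbourhood of $\partial\Omega_R$; since $\Omega$ is Lipschitz, this neighbourhood has Lebesgue measure $O(R^{d-1}\lambda^j)$, so the number of order-$j$ tiles needed to exhaust $U_j \setminus U_{j+1}$ is at most $C R^{d-1} \lambda^{-(d-1)j}$. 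Writing $\Omega_R$ as the essentially disjoint union $U_{k_0} \sqcup \bigsqcup_{j<k_0}(U_j\setminus U_{j+1})$, using finite additivity of $\Phi^+_{v,\mathcal{T}}$, and absorbing the remaining zero-measure boundary set into its extension to Lipschitz domains, we obtain
$$\bigl|\Phi^+_{v,\mathcal{T}}(\Omega_R)\bigr| \leq C' R^{d-1} \sum_{j=0}^{k_0} j^{s-1} \bigl(|\theta|/\lambda^{d-1}\bigr)^{j}.$$

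Since $v \in E^{++}$, we have $|\theta|>\lambda^{d-1}$, so this geometric sum is dominated by its last term $k_0^{s-1}(|\theta|/\lambda^{d-1})^{k_0}$; substituting $\lambda^{k_0}\asymp R$ and $\log|\theta|/\log\lambda = d\log|\theta|/\log\theta_1 = \alpha$ (via Remark \ref{r.theta_1lambda}) collapses the right-hand side to $C_1 (\log R)^{s-1} R^\alpha$, which is the claimed bound. The main technical difficulty I anticipate is making the hierarchical decomposition rigorous and uniform in $\mathcal{T}\in\mathfrak{X}_\zeta$: one needs the careful extension of $\Phi^+_{v,\mathcal{T}}$ from finite unions of order-$0$ tiles to Lipschitz domains (asserted in \cite{limitthm}), together with uniform control of the tubular neighbourhood of $\partial\Omega_R$ and of the Jordan-block growth constants. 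Both uniformities ultimately hinge on the FLC hypothesis.
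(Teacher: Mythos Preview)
The paper does not prove this lemma at all: it is quoted verbatim from \cite[Lemma~3.3]{limitthm} and used as a black box, so there is no ``paper's own proof'' to compare against. Your sketch is essentially the argument that \cite{limitthm} carries out, namely a hierarchical approximation of $\Omega_R$ by tiles of orders $0,1,\dots,k_0$ combined with the Jordan-block growth estimate $\|(S^t)^k v\|\le C\,k^{s-1}|\theta|^k$ and the Lipschitz tubular-neighbourhood bound on the number of boundary tiles; the geometric sum you obtain and its evaluation via $\lambda^{k_0}\asymp R$ are correct.

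One small imprecision: the set $\Omega_R\setminus U_0$ is \emph{not} of zero Lebesgue measure as you suggest. It is the intersection of $\Omega_R$ with the order-$0$ tiles that straddle $\partial\Omega_R$, and generically has volume of order $R^{d-1}$. The correct way to close the argument is exactly the ``careful extension of $\Phi^+_{v,\mathcal{T}}$ to Lipschitz domains'' you allude to: in \cite{limitthm} one shows that each such partial tile contributes a uniformly bounded amount to $\Phi^+_{v,\mathcal{T}}$, so this leftover is $O(R^{d-1})$ and is absorbed into the $j=0$ term of your sum. With that adjustment the outline is sound and matches the original source.
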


Before stating the main theorem on the estimates of deviation in ergodic sums, let us remark that it only gives estimates for cylindrical functions which we define as follows.

\begin{definition}\label{d.cylindrical}
A function $f:\mathfrak{X}_{\zeta}\rightarrow \R$ in $L^1(\mathfrak{X}_{\zeta},\mu)$ is cylindrical if it depends only on the tile containing the origin. More precisely, there is a family of functions $(\varPsi_i)_{i \in \llbracket1,m\rrbracket}$ where \mbox{$\varPsi_i: \mathrm{supp}(T_i)\rightarrow \R$} and $\varPsi_i \in L^1 (\mathrm{supp}(T_i), \text{Leb}_d)$, where $\text{Leb}_d$ is the Lebesgue measure on $\R^d$, such that
$$
f(\mathcal{T})=\varPsi_i(x)\ \text{if} \ 0\in \mathrm{supp}(T_i)-x,\  T_i-x\in \mathcal{T}.
$$
\end{definition}

\begin{definition}\label{d.mphimoins}
 For any cylindrical function $f$ and for any vector $v=(v_1,...,v_m)\in \C^m$ we set
 $$
 m_{\Phi_{v}^-}(f)=\sum_{i=1}^m v_i \int_{\mathrm{supp}(T_i)}\varPsi_i(x)dx.
 $$
 
\end{definition}

Now denote by $l$ the dimension of $E^{++}$ (and $\widetilde E^{++}$) and choose bases $(u_i)_{i\leq l}$ and $(\widetilde u_i)_{i\leq l}$ of $E^{++}$ and $\widetilde E^{++}$ respectively. We write  $\Phi_{i,\mathcal{T}}^+= \Phi_{u_i,\mathcal{T}}^+$ and $m_{\Phi_{i}^-}=m_{\Phi_{\widetilde u_i}^-}$.

We can state the theorem describing the deviation of ergodic averages for the dynamical system $(\mathfrak{X}_{\zeta},\R^d,\mu)$ on Lipschitz domains like thus.

\begin{theorem}\cite[Theorem 4.3]{limitthm}\label{t.erg_dev}
Let $\zeta$ be a primitive substitution with finite local complexity such that the dynamical system $(\mathfrak{X}_{\zeta},\R^d)$ is not periodic, and let $\Omega$ be a bounded Lipschitz domain in $\R^d$. Then there exists a positive constant $C$ depending on $\zeta$ and $\Omega$ such that, for any cylindrical function $f$, any tiling $\mathcal{T} \in \mathfrak{X}_{\zeta}$ and any $R>0$,
 $$\left| \int_{\Omega_R}f\circ h_x(T)dx-\text{Leb}_d(\Omega_R)\int_{\mathfrak{X}_{\zeta}}fd\mu - \sum_{n=2}^l \Phi_{n,\mathcal{T}}^+(\Omega_R)\cdot m_{\Phi_{n}^-}(f) \right|\leq CR^{d-1}(\log R)^{s}\int_{\mathfrak{X}_{\zeta}}|f|d\mu$$
 where $s$ is the maximal size of the Jordan block corresponding to eigenvalues satisfying $|\theta| = \theta_1^{\frac{d-1}{d}}$. If there are no such eigenvalues, then $s=0$.
\end{theorem}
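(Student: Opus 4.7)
The plan is to reinterpret the ergodic integral as a finitely-additive $\Phi^+$-measure on $\Omega_R$ and then spectrally decompose it. Introduce the vector $c=(c_1,\ldots,c_m)\in\R^m$ with $c_j:=\int_{\mathrm{supp}(T_j)}\varPsi_j(x)\,dx$. The cylindrical structure of $f$ gives, for any tile $T$ of order $k$ and type $j$,
$$\int_{\mathrm{supp}(T)}f\circ h_x(\mathcal{T})\,dx=\sum_{i=1}^m (S^k)_{i,j}c_i=((S^t)^k c)_j=\Phi^+_{c,\mathcal{T}}(\mathrm{supp}(T)),$$
since such a tile subdivides into $(S^k)_{i,j}$ sub-tiles of type $i$ at order $0$ and the integral of $f\circ h_x$ depends only on the type. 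This is the crucial bridge between the ergodic integral and the finitely-additive-measure machinery.

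First I build a hierarchical patch $\hat\Omega_R\subset\Omega_R$ by packing the interior with tiles of order $K$ (with $\lambda^K\simeq R$) and filling the boundary collar with tiles of orders $K-1,\ldots,0$. The Lipschitz property of $\partial\Omega$ forces $\mathrm{Leb}_d(\Omega_R\setminus\hat\Omega_R)=O(R^{d-1})$ with a constant depending only on $\mathrm{Lip}(\partial\Omega)$. By finite additivity, $\int_{\hat\Omega_R}f\circ h_x(\mathcal{T})\,dx=\Phi^+_{c,\mathcal{T}}(\hat\Omega_R)$, and the residual ergodic integral is bounded by $O(R^{d-1})\sum_i\|\varPsi_i\|_{L^1}$, which is comparable to $R^{d-1}\int|f|\,d\mu$ up to constants depending only on $\zeta$ (via unique ergodicity, which keeps the tile frequencies bounded and bounded away from zero).

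Next, decompose $c=c^{(1)}+\sum_{n=2}^l m_{\Phi_n^-}(f)\,u_n+c^{\mathrm{sl}}$ along the generalised eigenspaces of $S^t$: the Perron line, the basis $(u_n)$ of $E^{++}$ with coefficients $\tilde u_n\cdot c=m_{\Phi_n^-}(f)$ coming from the duality between $(u_n)$ and $(\tilde u_n)$, and a residual $c^{\mathrm{sl}}$ supported on Jordan cells with $|\theta|\leq\theta_1^{(d-1)/d}$. Linearity of $\Phi^+$ on the patch gives
$$\Phi^+_{c,\mathcal{T}}(\hat\Omega_R)=\Phi^+_{c^{(1)},\mathcal{T}}(\hat\Omega_R)+\sum_{n=2}^l m_{\Phi_n^-}(f)\,\Phi^+_{u_n,\mathcal{T}}(\hat\Omega_R)+\Phi^+_{c^{\mathrm{sl}},\mathcal{T}}(\hat\Omega_R).$$
The Perron term equals $\mathrm{Leb}_d(\Omega_R)\int f\,d\mu$ up to $O(R^{d-1})$: the Perron projection of $c$ produces the $\mu$-density, and $\Phi^+$ at the Perron vector is Lebesgue measure up to a scalar. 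For each $u_n\in E^{++}$, Lemma \ref{l.selfsimilar} extends $\Phi^+_{u_n,\mathcal{T}}$ to $\Omega_R$ itself, and Lemma \ref{l.meas_bound} applied to the residual collar yields $|\Phi^+_{u_n,\mathcal{T}}(\Omega_R)-\Phi^+_{u_n,\mathcal{T}}(\hat\Omega_R)|=O(R^{d-1})$ (since the collar has diameter $\simeq R$ and the exponent $\alpha<d$). The slow part is handled directly through the hierarchy: on order-$k$ tiles the Jordan-block factor grows at most like $\lambda^{k(d-1)}(\log\lambda^k)^{s-1}$, the hierarchy uses $O(R^{d-1}\lambda^{-k(d-1)})$ order-$k$ tiles near $\partial\Omega_R$ plus a bounded number of top-order tiles, and summing over $k=0,\ldots,K$ produces the claimed $O(R^{d-1}(\log R)^s)\int|f|\,d\mu$ bound.

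The main obstacle is engineering the hierarchical packing so that three things happen simultaneously: (i) the boundary collar has the correct $O(R^{d-1})$ Lipschitz-only volume bound at every scale of the hierarchy; (ii) the residual error in extending $\Phi^+_{u_n,\mathcal{T}}$ from $\hat\Omega_R$ to $\Omega_R$ remains $O(R^{d-1})$, which is exactly the content of the extension to Lipschitz domains together with Lemma \ref{l.meas_bound}; and (iii) the slow-eigenvalue contribution telescopes neatly across the $K\simeq\log R$ scales rather than accumulating a factor worse than $(\log R)^s$. Matching the Perron component to $\mathrm{Leb}_d(\Omega_R)\int f\,d\mu$ is conceptually routine but still uses unique ergodicity to link the Perron right eigenvector of $S^t$ to the tile frequencies and hence to the density of $\mu$.
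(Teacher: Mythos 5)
This statement is quoted verbatim from \cite[Theorem 4.3]{limitthm}; the paper under review offers no proof of it, so there is nothing internal to compare against. Your sketch is, in outline, a faithful reconstruction of the Bufetov--Solomyak argument: the identity $\int_{\mathrm{supp}(T)}f\circ h_x(\mathcal{T})\,dx=((S^t)^k c)_j=\Phi^+_{c,\mathcal{T}}(\mathrm{supp}(T))$ for an order-$k$ tile of type $j$ is exactly the bridge used there, the spectral splitting of $c$ into Perron, $E^{++}$ and slow components is the right decomposition, and the counting of order-$k$ tiles along the boundary ($O((R/\lambda^k)^{d-1})$ tiles, each weighted by at most $\lambda^{k(d-1)}k^{s-1}$, summed over $K\simeq\log_\lambda R$ scales) is precisely where the $R^{d-1}(\log R)^s$ error comes from. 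One step is stated loosely: Lemma \ref{l.meas_bound} bounds $\Phi^+_{v,\mathcal{T}}(\Omega_R)$ on the whole scaled domain by $C(\log R)^{s-1}R^{\alpha}$ with $\alpha>d-1$, so it does not by itself give the $O(R^{d-1})$ control of $\Phi^+_{u_n,\mathcal{T}}(\Omega_R)-\Phi^+_{u_n,\mathcal{T}}(\hat\Omega_R)$ that you need. What actually gives it is the convergence estimate inside the construction of the extension to Lipschitz domains (Lemma \ref{l.selfsimilar} / \cite[Lemma 3.2]{limitthm}): the level-$k$ increments of the defining sum are $O\bigl(R^{d-1}(|\theta|/\lambda^{d-1})^{k}|k|^{s'-1}\bigr)$ and form a geometrically convergent series precisely because $|\theta|>\lambda^{d-1}$ on $E^{++}$. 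You do gesture at this in your discussion of obstacle (ii), so I read this as an imprecise citation rather than a gap; with that correction the sketch is sound and coincides with the proof in the cited reference.
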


\section{Spectral measures at zero}\label{s.spectral_measures}

Recall that for a measure $m$ on a space $\R^d$, the Fourier transform $\widehat{m}$ of $m$ is a function from $\R^d$ to $\C$ defined by
$$
\forall x \in \R^d,\  \widehat{m}(x)=\int_{\omega \in\R^d}e^{2i\pi\langle\omega,x\rangle}dm(\omega).
$$

We start by stating the spectral theorem:

\begin{theorem}\label{t.spectral}
 Let $f \in L^2(\mathfrak{X}_{\zeta},\mu)$. There exists a positive measure $\sigma_f$ on $\R^d$ called the spectral measure defined by $ \widehat{\sigma_f}(x) := \langle f \circ h_x, f \rangle $ for all $x \in \R^d$
such that the function mapping the function  $\left(\mathcal{T}\mapsto f\circ h_x (\mathcal{T})\right)$ in $ L^2(\mathfrak{X}_{\zeta},\mu)$ to the function $\left(\omega\mapsto e^{2i\pi \langle \omega,x \rangle}\right)$ extends to an isometry $J$ between a closed subspace of $L^2(\mathfrak{X}_{\zeta},\mu)$ and $L^2(\R^d,\sigma_f)$.
\end{theorem}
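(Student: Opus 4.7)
The plan is to follow the classical route via Bochner's theorem: nothing in the statement uses the self-similar structure, only that $\R^d$ acts continuously and in a measure-preserving way on $(\mathfrak{X}_\zeta, \mu)$. First I would study the correlation function $\varphi_f(x) := \langle f\circ h_x, f\rangle_{L^2(\mu)}$ and verify that it is both continuous and positive definite on $\R^d$. Continuity follows from the strong continuity of the unitary representation $x \mapsto (g \mapsto g\circ h_x)$ on $L^2(\mathfrak{X}_\zeta,\mu)$, itself a consequence of the continuity of the translation action on $\mathfrak{X}_\zeta$, the invariance of $\mu$, and the density of compactly supported continuous functions in $L^2(\mu)$. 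Positive definiteness is the pointwise identity
$$\sum_{j,k=1}^n c_j \overline{c_k}\,\varphi_f(x_j - x_k) = \Bigl\|\sum_{j=1}^n c_j\, f\circ h_{x_j}\Bigr\|_{L^2(\mu)}^2 \geq 0,$$
obtained by expanding the right-hand side and using $\langle f\circ h_{x_j}, f\circ h_{x_k}\rangle = \varphi_f(x_j - x_k)$, which in turn follows from the $\R^d$-invariance of $\mu$. Bochner's theorem then produces a unique positive finite Radon measure $\sigma_f$ on $\R^d$ such that $\widehat{\sigma_f} = \varphi_f$; this is the desired spectral measure.

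To construct the isometry $J$, I would define it first on the linear subspace $\mathcal{D} \subset L^2(\mathfrak{X}_\zeta,\mu)$ spanned by the translates $\{f\circ h_x\}_{x \in \R^d}$ by the formula $J(f\circ h_x)(\omega) := e^{2i\pi\langle\omega, x\rangle}$ together with linear extension. Well-definedness and the isometry property simultaneously follow from the identity
$$\Bigl\|\sum_j c_j\, f\circ h_{x_j}\Bigr\|_{L^2(\mu)}^2 = \sum_{j,k} c_j\overline{c_k}\,\widehat{\sigma_f}(x_j-x_k) = \Bigl\|\sum_j c_j\, e^{2i\pi\langle\cdot,x_j\rangle}\Bigr\|_{L^2(\sigma_f)}^2,$$
the first equality being the defining relation for $\sigma_f$ and the second a direct expansion in $L^2(\sigma_f)$. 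By continuity, $J$ then extends uniquely to an isometry from the cyclic subspace $Z(f) := \overline{\mathcal{D}} \subset L^2(\mathfrak{X}_\zeta,\mu)$ into $L^2(\R^d, \sigma_f)$.

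The only step that is not essentially automatic is showing that the image of $J$ is all of $L^2(\R^d, \sigma_f)$, which amounts to density of the exponentials $\{e^{2i\pi\langle\cdot,x\rangle}\}_{x\in\R^d}$ in $L^2(\sigma_f)$. I would argue this by duality: if $g\in L^2(\sigma_f)$ is orthogonal to every such exponential, then $\int \overline{g(\omega)}\, e^{2i\pi\langle\omega,x\rangle}\, d\sigma_f(\omega) = 0$ for all $x\in\R^d$. Since $\sigma_f$ is finite, $\overline{g}\, d\sigma_f$ defines a finite complex Borel measure whose Fourier transform vanishes identically, and hence is the zero measure by injectivity of the Fourier transform on finite Borel measures on $\R^d$; therefore $g=0$ in $L^2(\sigma_f)$. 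This density and surjectivity step is the only mildly subtle point in the argument; everything else reduces to invariance of $\mu$ and a direct application of Bochner's theorem.
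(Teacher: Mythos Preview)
Your argument is correct and is precisely the classical Bochner route to the spectral theorem for a unitary $\R^d$-representation with a cyclic vector. Note, however, that the paper does not prove this statement at all: it is stated as background and the reader is referred to \cite{ktspectral} for details. So there is no ``paper's own proof'' to compare against; what you have written is in fact the standard argument that would be found in such a reference, and nothing in it requires correction.
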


We refer the reader to \cite{ktspectral} for details on this theorem, but in particular it states that
$$
 \forall x \in \R^d, \int_{\omega \in {\R}^d} e^{2i\pi\langle \omega , x \rangle} d\sigma_f(\omega) = \langle f \circ h_x, f \rangle,
$$
where
$$
\langle f \circ h_x, f \rangle=\int_{\mathcal{T}\in \mathfrak{X}_\zeta} f(\mathcal{T})f\circ h_x(\mathcal{T})d\mu(\mathcal{T}).
$$
This theorem introduces the spectral measures on which Theorem \ref{t.main} gives results. In order to prove Theorem \ref{t.main} we also need the set
$$
\mathcal{S}_F(\R_+)=\{f\in C^{\infty}(\R_+, \R) \mid \forall \alpha, \beta \in \N, \sup \limits_{x \in \R_+}|x^{\alpha} f^{(\beta)}(x)| < \infty \quad \text{and}\quad \forall k >0, f^{(k)}(0)=0\}
$$
and the continuous function $\text{Rad}: \mathcal{S}_F(\R_+) \rightarrow \mathcal{S}(\R^d)$ (where $\mathcal{S}(\R^d)$ is the Schwartz space of rapidly decreasing functions on $\R^d$) defined by
$$
\forall \psi \in \mathcal{S}_F(\R_+), \forall x \in \R^d,\quad \text{Rad}(\psi)(x)=\psi(\|x\|).
$$

\subsection{Proof of Theorem \ref{t.main}}

\subsubsection{Step 1: using the spectral isomorphism}

Recall that Theorem \ref{t.spectral} states that
$$ \forall x \in \R^d, \int_{\omega \in {\R}^d} e^{2i\pi\langle \omega , x \rangle} d\sigma_f(\omega) = \langle f \circ h_x, f \rangle.$$

Moreover, the spectral isomorphism $J$ satisfies $J^{-1}\left(\omega \mapsto e^{2i\pi\langle \omega , x \rangle}\right)=\left( \mathcal{T} \mapsto f \circ h_x(\mathcal{T}) \right)$ for all $x$ in $\R^d$.

Let us analyse the behaviour of $\sigma_f$ near zero. In order to do this, we fix a function $ \psi \in \mathcal{S}_F(\R_+) $ and study the integral $\int | \text{Rad}(\psi)(R\omega)|^2d\sigma_f(\omega)$ for all $R>0$.

Applying the Inverse Fourier Transform, which preserves $\mathcal{S}(\R^d)$ and the radial property of $\text{Rad}(\psi)$, we obtain

$$\text{Rad}(\psi)(R\omega)=R^{-d}\int_{x \in {\R}^d} e^{2i\pi\langle \omega , x \rangle}\widehat{\text{Rad}(\psi)}(x / R) dx.$$

By the spectral isomorphism, this function (of $\omega$) maps, by $J^{-1}$, to
$$
R^{-d}\int_{x \in {\R}^d} f\circ h_x(\mathcal{T}) \cdot \widehat{\text{Rad}(\psi)}(x / R) dx
$$
which is a function of $\mathcal{T}$ in $L^2(\mathfrak{X}_\zeta, \mu)$. Since the spectral isomorphism preserves the $L^2$ norm, we obtain

$$\int_{\omega \in {\R}^d} |\text{Rad}(\psi)(R\omega)|^2 d\sigma_f(\omega)=
R^{-2d}\int_{\mathcal{T} \in \mathfrak{X}_\zeta}\left|\int_{x \in {\R}^d} f\circ h_x(\mathcal{T}) \cdot \widehat{\text{Rad}(\psi)}(x /R) dx \right| ^2 d\mu(\mathcal{\mathcal{T}}).$$

Now let us change coordinates of the right-hand side and use spherical coordinates. Let $u$ be the mapping from spherical coordinates to euclidian coordinates on $\R^d$. Since Fourier transform preserves radial functions, $\widehat{\text{Rad}(\psi)}(\rho,0,...0)=\widehat{\text{Rad}(\psi)}(u(\rho,\phi_1,...\phi_{d-1}))$. For simplicity, in all that follows, let us write $\widehat{\text{Rad}(\psi)}(\rho)$ instead of $\widehat{\text{Rad}(\psi)}(\rho,0,...0)$. Then the integral 
$$
\int_{x \in {\R}^d} f\circ h_x(\mathcal{T}) \cdot \widehat{\text{Rad}(\psi)}(x /R) dx
$$ can be written as


$$\int_{\rho=0}^{+\infty}\int_{[0,\pi]^{d-1}} \int_{\phi_{d-1} =0}^{2\pi} 
f\circ h_{u(\rho,\phi_1,...\phi_{d-1})}(\mathcal{T}) \cdot
\widehat{\text{Rad}(\psi)}(\rho/R) \cdot 
\rho^{d-1}\prod_{j=1}^{d-2}(\sin^{d-1-j}(\phi_j))  d\phi_{d-1}...d\phi_1 d\rho$$
or
$$
\int_{\rho=0}^{+\infty}\widehat{\text{Rad}(\psi)}(\rho/R) \int_{[0,\pi]^{d-1}} \int_{\phi_{d-1} =0}^{2\pi} 
f\circ h_{u(\rho,\phi_1,...\phi_{d-1})}(\mathcal{T}) 
\rho^{d-1}\prod_{j=1}^{d-2}(\sin^{d-1-j}(\phi_j))  d\phi_{d-1}...d\phi_1 d\rho
$$
Now, an integration by parts with regards to the variable $\rho$, differentiating $\widehat{\text{Rad}(\psi)}(\rho/R)$ and integrating $ \int_{[0,\pi]^{d-1}} \int_{\phi_{d-1} =0}^{2\pi} 
f\circ h_{u(\rho,\phi_1,...\phi_{d-1})}(\mathcal{T}) 
\rho^{d-1}\prod_{j=1}^{d-2}(\sin^{d-1-j}(\phi_j))  d\phi_{d-1}...d\phi_1$, yields

\begin{equation}\label{e.sftrho}
 \int_{x \in {\R}^d} f\circ h_x(\mathcal{T}) \cdot \widehat{\text{Rad}(\psi)}(x/R) dx = 
 2i\pi R^{-1} \int_{\rho=0}^{+\infty} S(f,\mathcal{T},\rho)\mathcal{F}(x_1 \text{Rad}(\psi)(x))(\rho/R)d\rho,
\end{equation}
 where 
 
 $$ S(f,\mathcal{T},\rho)=\int_{x \in B(0,\rho)} f \circ h_x(\mathcal{T})dx.$$
 
 \subsubsection{Step 2: estimating the rest}
 
 From Theorem \ref{t.erg_dev} and Lemma \ref{l.meas_bound}, and since $\int f d\mu=0$, we have
 
 $$ S(f,\mathcal{T},\rho)= \Phi_{2,\mathcal{T}}^+(\rho) \cdot m_{\Phi_2^-}(f) + \mathcal{R}(\rho),$$
 where
 $$|\mathcal{R}(\rho)| \leq C_1 \max(1,\rho^{\alpha - \varepsilon})$$
 with $\alpha = \frac{d \log (\theta_2)}{\log (\theta_1)}$ and some $\varepsilon > 0$ small enough.
 
 Since $\mathcal{F}(x_1 \text{Rad}(\psi)(x))$ is in $\mathcal{S}(\R^d)$, we have
 
 $$|\mathcal{F}(x_1 \text{Rad}(\psi)(x))(\rho)| \leq C_{\psi,\alpha} \min (1,\rho^{-\alpha-1}).$$
 
 Finally, 
$$
2\pi R^{-1}\left| \int_{\rho =0}^{+\infty} \mathcal{R}(\rho)\mathcal{F}(x_1 \text{Rad}(\psi)(x))(\rho)d\rho \right|
\leq
C_1C_{\psi,\alpha}2\pi R^{-1}\left(1+\int_1^R \rho^{\alpha - \varepsilon} d\rho + \int_R^{+\infty} \rho^{\alpha-\varepsilon}(\rho/R)^{-\alpha -1}d\rho\right)
$$
hence, since $\alpha$ is in $(d-1,d)$,
\begin{equation}\label{e.rest}
 2\pi R^{-1}\left| \int_{\rho =0}^{+\infty} \mathcal{R}(\rho)\mathcal{F}(x_1 \text{Rad}(\psi)(x))(\rho)d\rho \right|
=O(R^{\alpha- \varepsilon}).
\end{equation}

\subsubsection{Step 3: change of variables in the main term}

Now that the error term in $\int_{\rho=0}^{+\infty} S(f,\mathcal{T},\rho)\mathcal{F}(x_1 \text{Rad}(\psi)(x))(\rho/R)d\rho$ is estimated, let us study the main term in Equation~(\ref{e.sftrho}). To that end, we will assume that $$R=\lambda^N=\theta_1^{\frac{N}{d}}, N\geq1.$$

Let  $\rho=\lambda^N r = R r$, and let us use Lemma \ref{l.selfsimilar} to do the following renormalisation:
$$\Phi_{2,\mathcal{T}}^+(\rho) = \Phi_{2,\mathcal{T}}^+(\lambda^N r)=\theta_2^N \Phi_{2,\zeta^{-N}(\mathcal{T})}^+(r)=R^{\alpha} \Phi_{2,\zeta^{-N}(\mathcal{T})}^+(r).$$

After this change of variables we have
\begin{align*}
   &2i \pi R^{-1}\int_{\rho=0}^{+\infty}  (\Phi_{2,\mathcal{T}}^+(\rho) \cdot m_{\Phi_2^-}(f) + \mathcal{R}(\rho) )\mathcal{F}(x_1 \text{Rad}(\psi)(x))(\rho/R)d\rho\\
   &=2i \pi R^{\alpha} m_{\Phi_2^-}(f) \int_{r=0}^{+\infty} \Phi_{2,\zeta^{-N}(\mathcal{T})}^+(r)\mathcal{F}(x_1 \text{Rad}(\psi)(x))(r)dr.
  \end{align*}

The estimation of the rest in Equation \ref{e.rest} and this last equality can be used to get that 

\begin{align*}
 &\int_{\omega \in {\R}^d} |\text{Rad}(\psi)(R\omega)|^2 d\sigma_f(\omega)\\
&=4\pi^2 R^{2\alpha - 2d} (m_{\Phi_2^-}(f))^2 
\int_{\mathcal{T}\in\mathfrak{X}_{\zeta}} 
\left|\int_{r=0}^{+\infty} \Phi_{2,\mathcal{T}}^+(r)\mathcal{F}(x_1 \text{Rad}(\psi)(x))(r)dr\right|^2d\mu(\mathcal{T}) + O(R^{2\alpha - 2d -2\varepsilon})
\end{align*}
 
as $R\rightarrow + \infty$, since $\mu$ is $\zeta$ invariant by Lemma \ref{l.mu_zeta_inv}. Finally, with $R= \lambda^N$

\begin{align*}
 &\lim_{N\rightarrow +\infty} \frac{\int_{\omega \in {\R}^d} |\text{Rad}(\psi)(\lambda^N\omega)|^2 d\sigma_f(\omega)}{\lambda^{N(2\alpha - 2d)}}\\
 &=(2\pi m_{\Phi_2^-}(f))^2
\int_{\mathcal{T}\in\mathfrak{X}_{\zeta}} 
\left|\int_{r=0}^{+\infty} \Phi_{2,\mathcal{T}}^+(r)\mathcal{F}(x_1 \text{Rad}(\psi)(x))(r)dr\right|^2d\mu(\mathcal{T}).
\end{align*}

We can remark that the right-hand side is non zero by the hypothesis on $m_{\Phi_2^-}(f)$ and because $ \Phi_{2,\mathcal{T}}^+$ is not almost everywhere zero, since our substitution is defined on polyhedral prototiles (see Lemma 3.5 and section 6.2 of \cite{limitthm} for more details), and because this is true for all choice of $\psi$.

\subsubsection{Step 4: regularity of the limit}

This limit is a quadratic form on $\mathcal{S}_{F}(\R_+)$. Let us write the associated bilinear form on the space $\mathcal{S}_F(\R_+)\times\mathcal{S}_F(\R_+)$:

\begin{align*}
&Q(\psi_1,\psi_2):=\lim_{N\rightarrow +\infty} \frac{\int_{\omega \in {\R}^d} \text{Rad}(\psi_1)(\lambda^N\omega)\text{Rad}(\psi_2)(\lambda^N\omega) d\sigma_f(\omega)}{\lambda^{N(2\alpha - 2d)}}
\\&=(2 \pi m_{\Phi_2^-}(f))^2
\int_{\mathcal{T}\in\mathfrak{X}_{\zeta}} 
\left(\int_{r=0}^{+\infty} \Phi_{2,\mathcal{T}}^+(r)\mathcal{F}(x_1 \text{Rad}(\psi_1)(x))(r)dr \int_{r=0}^{+\infty} \Phi_{2,\mathcal{T}}^+(r)\mathcal{F}(x_1 \text{Rad}(\psi_2)(x))(r)dr\right)
d\mu(\mathcal{T}).
\end{align*}

 Let us remark that this bilinear form (which depends solely on the product $\psi_1 \psi_2$) is continuous, non-negative on non-negative functions, and  not identically zero.
 
Let us also prove the following.

\begin{lemma}
 Every function in $C_0^{\infty}(\R_+,\R)$ is the uniform limit of a sequence of functions in $\mathcal{S}_F(\R_+)$.
\end{lemma}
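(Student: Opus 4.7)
Fix $\phi \in C_0^\infty(\R_+,\R)$; the plan is to exhibit an explicit sequence $(\psi_n)$ in $\mathcal{S}_F(\R_+)$ converging uniformly to $\phi$. The naive attempt \textemdash\ multiplying $\phi$ by a smooth cutoff that vanishes near $0$ \textemdash\ fails uniform convergence whenever $\phi(0) \neq 0$, since what is left of $\phi$ on the cutoff region is approximately $\phi(0)$ and cannot be made small. The remedy is to \emph{interpolate} between $\phi$ and the constant $\phi(0)$ on a shrinking neighbourhood of the origin: elements of $\mathcal{S}_F(\R_+)$ are allowed to take nonzero values at $0$ (only the derivatives of positive order must vanish there), which gives us exactly the freedom we need.

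Concretely, I would fix once and for all a smooth function $g\colon\R\to[0,1]$ with $g\equiv 0$ on $(-\infty,1]$ and $g\equiv 1$ on $[2,+\infty)$, set $g_n(x):=g(nx)$, and define
\[
\psi_n(x):=\phi(0)+g_n(x)\bigl(\phi(x)-\phi(0)\bigr).
\]
On $[0,1/n]$ we have $g_n\equiv 0$, so $\psi_n\equiv \phi(0)$ there; in particular $\psi_n^{(k)}(0)=0$ for every $k\geq 1$. Away from $0$ the function is smooth, and since $g_n\equiv 1$ on $[2/n,+\infty)$ the identity $\psi_n(x)=\phi(x)$ holds for $x\geq 2/n$, so $\psi_n$ inherits the compact support of $\phi$ (enlarged by at most a neighbourhood of $0$). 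Being smooth and compactly supported, $\psi_n$ automatically satisfies every seminorm bound $\sup_{x\in\R_+}|x^\alpha\psi_n^{(\beta)}(x)|<\infty$; combined with the flatness at $0$, this places $\psi_n$ in $\mathcal{S}_F(\R_+)$.

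Finally, uniform convergence follows from the direct computation
\[
\psi_n(x)-\phi(x)=\bigl(\phi(0)-\phi(x)\bigr)\bigl(1-g_n(x)\bigr),
\]
whose right-hand side vanishes for $x\geq 2/n$ and is dominated by $|\phi(x)-\phi(0)|$ on $[0,2/n]$, giving $\|\psi_n-\phi\|_\infty\leq \sup_{x\in[0,2/n]}|\phi(x)-\phi(0)|\to 0$ by continuity of $\phi$ at $0$. There is no real obstacle in this proof; the only conceptual point is the interpolation trick, which is dictated by the precise shape of the definition of $\mathcal{S}_F(\R_+)$.
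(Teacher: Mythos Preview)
Your proof is correct. Both you and the paper exploit the same key observation: since elements of $\mathcal{S}_F(\R_+)$ may take a nonzero value at $0$ and only the higher derivatives must vanish, it suffices to replace $\phi$ near the origin by the constant $\phi(0)$. The difference lies in the implementation. The paper carries this out in two stages: first it approximates $\phi$ uniformly by merely continuous compactly supported functions $f_n$ that equal $\phi(0)$ on $[-1/n,1/n]$, and then it mollifies each $f_n$ by a regularising kernel $\rho_m$ (with $m>n$ so that the convolution remains constant on a neighbourhood of $0$), finishing with a diagonal extraction $(\rho_{m_n}*f_n)$. Your argument bypasses the mollification and the diagonal step entirely: because $\phi$ is already smooth, the single formula $\psi_n=\phi(0)+g_n\cdot(\phi-\phi(0))$ with a smooth cutoff $g_n$ produces the approximants directly and in one line gives the uniform estimate. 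Your route is therefore shorter and more explicit; the paper's route, by going through continuous intermediaries, would adapt more readily if one only had $\phi$ continuous rather than smooth, but that extra generality is not needed here.
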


\begin{proof}
  To see this, it suffices to take a function $f$ in $C_0^{\infty}(\R)$ and a sequence of functions $(f_n)_{n\in \N^*}$ in $C(\R)$ such that

\begin{itemize}
 \item $\forall n \in \N^*, \forall x \in [\frac{-1}{n},\frac{1}{n}], f_n(x)=f(0)$;
 \item $(f_n)_{n\in \N^*}$ converges uniformly towards $f$;
 \item $\forall n \in \N^*, f_n$ is compactly supported.
\end{itemize}

Then we take a regularizing sequence $\left( \rho_m \right)_{m \in \N}$ and remark that for all $n \in \N^*$ and $m>n$, the convolution $\rho_m * f_n$ is in $C^{\infty}_0(\R)$ and is flat in 0. Thus there exists a strictly increasing sequence $(m_n)_{n \in \N^*} \in \N^*$ such that the sequence $(\rho_{m_n} * f_n)_{n\in \N^*}$ (consisting of $C^{\infty}_0$ functions flat in 0) uniformly converges to $f$, thus proving the lemma.
\end{proof}

Let us also remark that

$$\forall \psi_1, \psi_2 \in \mathcal{S}_F(\R_+),  \quad Q(\psi_1^2,\psi_2) < Q(\psi_1,\psi_1) \|\psi_2\|_{\infty}.$$

As a consequence, for all $\psi_1$ in $\mathcal{S}_F(\R_+)$, $\psi_2 \mapsto Q(\psi_1^2,\psi_2)$ has a continuous extension with domain the whole of $C^{\infty}_0(\R_+, \R)$. Thus it is a distribution that is non-negative on non-negative functions.

\subsubsection{Step 5: a technical lemma}

 Let us prove the following lemma.

\begin{lemma}
 There exists a $\sigma$-finite positive Radon measure $\eta$ on $\R_+$ such that for any $\psi_1,\psi_2$ in $C_0^{\infty}(\R_+, \R)$ we have
 $$ Q (\psi_1,\psi_2)=\int_{\R_+}\psi_1\psi_2d\eta.$$
\end{lemma}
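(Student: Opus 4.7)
The approach is to extract a positive Radon measure $\eta_\chi$ from each positive distribution $\Lambda_\chi := Q(\chi^2,\cdot)$ via the Riesz--Markov theorem, and then to patch the family $(\eta_\chi)_{\chi\in\mathcal{S}_F(\R_+)}$ together into a single intrinsic measure $\eta$ by exploiting the product-dependence of $Q$. As a first step, for each $\chi\in\mathcal{S}_F(\R_+)$, the previous discussion exhibits $\Lambda_\chi$ as a non-negative distribution on $C_0^\infty(\R_+)$ with $|\Lambda_\chi(\psi)|\leq Q(\chi,\chi)\|\psi\|_\infty$. The uniform density of $\mathcal{S}_F(\R_+)$ in $C_c(\R_+)$ established by the preceding lemma, combined with the Riesz--Markov representation theorem, yields a positive Radon measure $\eta_\chi$ on $\R_+$ satisfying $\Lambda_\chi(\psi)=\int\psi\,d\eta_\chi$ for every $\psi\in C_c(\R_+)$.

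Next, the product-dependence of $Q$ forces a compatibility: for $\chi,\chi'\in\mathcal{S}_F(\R_+)$ both strictly positive on an open subset $U\subset\R_+$ and any $\psi\in C_c^\infty(U)$, the function $(\chi/\chi')^2\psi$ remains in $\mathcal{S}_F(\R_+)$ (since division by a nowhere-vanishing $\mathcal{S}_F$-function preserves the flatness-at-zero condition), and the products $\chi^2\cdot\psi$ and $\chi'^2\cdot((\chi/\chi')^2\psi)$ coincide on $\R_+$. Hence $Q(\chi^2,\psi)=Q(\chi'^2,(\chi/\chi')^2\psi)$, which in integral form gives $d\eta_\chi=(\chi/\chi')^2\,d\eta_{\chi'}$ on $U$.

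I then build $\eta$ itself by choosing a sequence $(\chi_n)_{n\geq 1}\subset\mathcal{S}_F(\R_+)$ with $\chi_n\equiv 1$ on $[0,n]$ and compactly supported in $[0,n+1]$ (a standard cutoff-plus-mollifier construction; the flatness at zero is automatic since $\chi_n$ is constant near the origin), and setting $\eta:=\eta_{\chi_n}$ on $[0,n]$, noting that $\chi_n^{-2}\equiv 1$ there. The compatibility of Step 2 (applied with $\chi=\chi_n,\ \chi'=\chi_{n+1}$) makes this consistent as $n$ grows, and the Radon property of each $\eta_{\chi_n}$ guarantees that $\eta$ is a $\sigma$-finite positive Radon measure on $\R_+$. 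Finally, for $\psi_1,\psi_2\in C_0^\infty(\R_+,\R)$ both supported in $[0,n]$, take $\chi:=\chi_{n+1}$, so that $\chi\equiv 1$ on the supports of $\psi_1,\psi_2$, and $\psi_1\psi_2/\chi^2=\psi_1\psi_2$ identically. Approximating $\psi_1,\psi_2$ by $\psi_1^k,\psi_2^k\in\mathcal{S}_F(\R_+)$ uniformly through the density lemma and invoking the product-dependence of $Q$,
\[
Q(\psi_1^k,\psi_2^k)=Q(\chi^2,\psi_1^k\psi_2^k)=\int\psi_1^k\psi_2^k\,d\eta_\chi=\int\psi_1^k\psi_2^k\,d\eta,
\]
and passing to the limit $k\to\infty$ (using local finiteness of $\eta$ and uniform convergence on $[0,n+1]$) yields the desired identity $Q(\psi_1,\psi_2)=\int\psi_1\psi_2\,d\eta$.

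The main technical subtlety is the compatibility step: one must verify that $(\chi/\chi')^2\psi$ belongs to $\mathcal{S}_F(\R_+)$, which rests on the fact that division by a nowhere-vanishing $\mathcal{S}_F$-function preserves the flatness-at-zero condition---an elementary inductive computation with Leibniz's rule. Once this algebraic observation is secured, the remainder is a standard patching argument for Radon measures.
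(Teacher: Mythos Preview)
Your argument is correct and follows essentially the same strategy as the paper: extract, for each $\chi$, a positive Radon measure $\eta_\chi$ from the positive distribution $Q(\chi^2,\cdot)$ via Riesz--Markov, use the product-dependence of $Q$ to establish compatibility among the $\eta_\chi$, and then assemble a single measure $\eta$.

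The only genuine difference is in the assembly step. The paper simply picks one strictly positive $\psi\in\mathcal{S}_F(\R_+)$ (such functions exist: take any smooth strictly positive function that is constant on a neighbourhood of $0$ and Schwartz at infinity) and sets $d\eta=\psi^{-2}\,d\eta_\psi$ in one stroke, checking independence of $\psi$ via the identity $d\eta_{\psi_1\psi_2}=\psi_2^2\,d\eta_{\psi_1}$. You instead patch via an exhausting sequence of cutoffs $\chi_n$ that are identically $1$ on $[0,n]$. Both routes are equivalent; yours avoids invoking a globally positive $\psi$ at the cost of a patching argument, while the paper's is a line shorter but implicitly relies on the existence of such a $\psi$.

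One small wording issue: in your compatibility step you assert $(\chi/\chi')^2\psi\in\mathcal{S}_F(\R_+)$ for $\psi\in C_c^\infty(U)$. What you actually need (and what suffices) is that this function lies in $C_0^\infty(\R_+,\R)$, the domain to which $Q(\chi'^2,\cdot)$ has already been continuously extended; the product-dependence identity then transfers to this extended domain by the density and approximation you invoke anyway. This is cosmetic, not a gap.
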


\begin{proof}

In order to prove this lemma, we need to use a classical result from the theory of distribution stating that if a distribution $D$ supported on $\R_+$ is non-negative on non-negative functions in $C_0^{\infty}(\R_+, \R)$ then there exists a $\sigma$-finite positive Radon measure $\eta$ supported on $\R_+$ such that 
$$D(\varphi)=\int_{\R_+} \varphi d\eta.$$

First we observe that for any non-negative $\psi_1 \in \mathcal{S}_F(\R_+)$, the functional $\varphi \mapsto Q(\psi_1^2, \varphi)$ is a distribution on $\R_+$ that is non-negative on $\varphi \geq 0$. Thus there exists a measure $\eta_{\psi_1}$ such that

$$\forall \varphi \in C_0^{\infty}(\R_+, \R), \quad Q(\psi_1^2,\varphi)=\int_{\R_+}\varphi d\eta_{\psi_1}.$$

Remark that $Q((\psi_1\psi_2)^2,\varphi)=Q(\psi_1^2,\psi_2^2\varphi)$, hence

$$d\eta_{\psi_1\psi_2}=\psi_2^2d\eta_{\psi_1}\quad \text{for}\quad \psi_1,\psi_2 \in \mathcal{S}_F (\R_+).$$

Thus, taking any $\psi >0$, $d\eta=\frac{1}{\psi^2}d\eta_{\psi}$ does not depend on the choice of $\psi$.

Now let $\psi_1,\psi_2$ be two functions in $C_0^{\infty}(\R_+, \R)$ and remark

$$Q(\psi_1,\psi_2):=Q(\psi^2,\frac{\psi_1\psi_2}{\psi^2})=\int_{\R_+}\psi_1\psi_2d\eta,$$
which proves the lemma.
 
\end{proof}

Let us now conclude that the following formula

$$\lim_{N\rightarrow +\infty} \frac{\int_{\omega \in {\R}^d} \text{Rad}(\psi_1)(\lambda^N\omega)\text{Rad}(\psi_2)(\lambda^N\omega) d\sigma_f(\omega)}{\lambda^{N(2\alpha - 2d)}}=\int_{\R_+}\psi_1\psi_2d\eta$$

also holds for characteristic functions of intervals $[0,a]$ where $a$ is a point of continuity of the measure $\eta$.

Indeed, let us choose $a\in \R^+ $ such that $\eta(\{a\})=0$, let $\psi=\psi_1=\psi_2=\chi_{[0,a]}$ and choose sequences of functions $\psi_n^+, \psi_n^-$ in $C_0^{\infty}(\R_+,\R)$, flat in $0$, approximating $\psi$ respectively from above and below, converging pointwise to $\psi$ and uniformly on the complement of $(a-\delta, a+\delta)$ for any $\delta >0$.

By continuity of $\eta$ over non-atomic $a$ we have

$$\forall \varepsilon>0,\quad \exists \delta>0, \quad \eta((a-\delta, a+\delta))<\varepsilon.$$

As a consequence, 

$$\lim_{N\rightarrow +\infty} \int_{\R_+}\left(\psi_n^+\psi_n^+-\psi_n^-\psi_n^-\right)d\eta=0,$$

and so the formula is still true for the characteristic function $\chi_{[0,a]}$ whenever $\eta(\{a\})=0$. This proves Theorem \ref{t.main}.

\end{document}